\newtheorem{theorem}{Theorem}[section]
\newtheorem{lemma}[theorem]{Lemma}
\newtheorem{corollary}[theorem]{Corollary}
\newtheorem{proposition}[theorem]{Proposition}
\theoremstyle{definition}
\newtheorem{definition}[theorem]{Definition}
\newtheorem{remark}[theorem]{Remark}
\newtheorem{example}{Example}
\begin{document}

	\title[On free subgroups in Leavitt path algebras]{On free subgroups in Leavitt path algebras}
		  
 	\keywords{leavitt path algebra; cohn path algebra; free subgroup; simple module.\\ 
	\protect \indent 2020 {\it Mathematics Subject Classification.} 16S88; 20H20.}
	
	\maketitle
	
	\begin{center} {BUI XUAN HAI\footnote{Corresponding author}}
	\end{center}
    \begin{center}	
    	\tiny{(1) Faculty of Mathematics and Computer Science, University of Science, Ho Chi Minh City, Vietnam\\(2) Vietnam National University, Ho Chi Minh City, Vietnam\\
    		e-mail:  bxhai@hcmus.edu.vn\\
    	ORCID: 0000-0002-4208-7883} 
    \end{center} 
\begin{center} {HUYNH VIET KHANH}
\end{center}
\begin{center}	
	\tiny{Department of Mathematics and Informatics, HCMC University of Education,\\ 280 An Duong Vuong Str., Dist. 5, Ho Chi Minh City, Vietnam\\ e-mail: khanhhv@hcmue.edu.vn\\
	ORCID: 0000-0002-2109-4611} 
\end{center}
\bigskip

	\begin{abstract} 
		Let $E$ be a graph and $K$ a field. In this paper we prove that the multiplicative group of a unital noncommutative Leavitt path algebra $L_K(E)$ contains  non-cyclic free subgroups provided $K$ is of characteristic $0$. Further, we provide a description of the generators of  such free subgroups in term of the graph $E$. 
	\end{abstract}

\section{Introduction and preliminaries}

Let $R$ be an associative ring with identity $1\ne 0$. The set $R^\times$ of all invertible elements in $R$ with the multiplicative operation  constitutes a group called  the \textit{multiplicative group} of $R$. The existence of non-cyclic free subgroups in $R^\times$ is one of the problems which have attracted the attention of many authors for the last decades. Historically, it comes from the work \cite{Pa_lichman_1977}, where Lichtman conjectured the existence of non-cyclic free subgroups in the multiplicative group of an arbitrary noncommutative division ring. Later, Gon\c calvez and Mandel \cite{Pa_Goncalves-Mandel_1986} posed the more general conjecture that any subnormal subgroup in the multiplicative group of a division ring contains a non-cyclic free subgroup.  Although there are several  publications giving the affirmative answer to this conjecture in various particular cases (see e.g. \cite{Pa_Chiba_1996}, \cite{Pa_danh-deo_2023}, \cite{Pa_Goncalves-Mandel_1986}, \cite{Pa_Goncalvez_1984} \cite{Pa_Hai-Ngoc_2013}, \cite{Pa_Ngoc-Bien-Hai}, \cite{Pa_reichstein_1995}, and references therein), at the present time, it remains still unsolved in general. Our main purpose in the present paper is to prove that the multiplicative group of any unital noncommutative Leavitt path algebra over a field of characteristic $0$ always contains a non-cyclic free subgroup. Further, we shall describe the generators for such a subgroup. For the reader's convenience, in this section, let us recall some concepts and symbols we use throughout this paper regarding Leavitt path algebras over fields. For more details we refer to the book \cite{Bo_abrams_2017}.

A \textit{directed graph} is a graph, denoted by $E=(E^0, E^1, r, s)$, consisting of two sets $E^0$ and $E^1$ together with maps $r, s: E^1\to E^0$. The elements of $E^0$ and $E^1$ are called \textit{vertices} and \textit{edges} of $E$ respectively. For $e\in E^1$, we say that $s(e)$ is the \textit{source} and $r(e)$ is the \textit{range} of $e$. In this paper, the word “graph” will always mean “directed graph”. Moreover, concerning a graph as above, we often write $E$ instead of  $E=(E^0, E^1, r, s)$. 

Let $E$ be a graph, $v\in E^0$ and $e\in E^1$. We say that $v$ \textit{emits} $e$ if $s(e)=v$. A vertex $v$ is a \textit{sink} if it emits no edges, while it is an \textit{infinite emitter }if it emits infinitely many edges. A vertex $v$ is said to be \textit{regular} if it is neither a sink nor an infinite emitter. The set of all regular vertices in a graph $E$ is denoted by ${\rm Reg}(E)$. A \textit{finite path} $\mu$ of length $\ell(\mu):=n\ge 1$ is a finite sequence of edges $\mu=e_1e_2\cdots e_n$ with $r(e_i) =s(e_{i+1})$ for all $1\leq i\leq n-1$. We set $s(\mu):=s(e_1)$ and $r(\mu):=r(e_n)$. In addition, every vertex can be considered as a path of length $0$. The set of all finite paths (including all paths of length $0$) in $E$ is denoted by ${\rm Path}(E)$. 

Let us define the set $\left( E^1\right)^*=\{e^*| e\in E^1\}$. Any element of $\left( E^1\right)^*$ is called a \textit{ghost edge} of $E$. Also, we set $s(e^*):=r(e)$ and $r(e^*):=s(e)$. 

\begin{definition}[Leavitt path algebra]\label{LPA}
	Let $E$ be an graph and $K$ a field. The \textit{Leavitt path algebra of $E$ over $K$}, denoted by $L_K(E)$, is the free associative $K$-algebra generated by the set $E^0\cup E^1\cup (E^1)^*$, subject to the following relations:
	\begin{enumerate}[]
		\item[(V)] $vv'=\delta_{v,v'}v$ for all $v,v'\in E^0$, where $\delta_{ij}$ is the Kronecker symbol,
		\item[(E1)] $s(e)e=er(e)=e$ for all $e\in E^1$,
		\item[(E2)] $r(e)e^*=e^*s(e)=e^*$ for all $e\in E^1$,
		\item[(CK1)] $e^*f=\delta_{e,f}r(e)$ for all $e,f\in E^1$, and
		\item[(CK2)] $v=\sum_{\{e\in E^1|s(e)=v\}}ee^*$ for every $v\in{\rm Reg}(E)$.
	\end{enumerate}
\end{definition}
It is known (see \cite[Lemma 1.2.12]{Bo_abrams_2017}) that $L_K(E)$ is a unital algebra if and only if $E^0$ is finite. In this case, the identity in $L_K(E)$ is
$$1_{L_K(E)}=\sum_{v\in E^0}v.$$

From now on, we consider only Leavitt path algebras of graphs with a finite number of vertices, ensuring they are unital. It follows from Condition (E1) that if $E $ has an edge $e$ with $s(e)\ne r(e)$, then 
$$s(e)e=e\ne 0=er(e)s(e)=es(e),$$ 
and so $L_K(E)$ is noncommutative. 

It is clear from Condition (V) in Definition \ref{LPA} that $L_K(E)$ cannot be a division ring unless $|E^0|=1$. Recall also that   $L_K(E)$ is semiprimitive in view of \cite[Proposition 2.3.2]{Bo_abrams_2017}. In this paper, we shall prove the surprising fact that if  $K$ is a field of characteristic $0$ then the multiplicative group $L_K(E)^\times$ of a Leavitt path algebra $L_K(E)$ always contains a non-cyclic free subgroup provided that it is noncommutative. 


If  $\mu=e_1\cdots e_n\in {\rm Path}(E)$, then we call the element $\mu^*:=e_n^*\cdots e_2^*e_1^*$ of $L_K(E)$ a \textit{ghost path}. The set of all vertices on a path $\mu$ is denoted by $\mu^0$. If $\ell(\mu)\ge 1$ and $v=s(\mu)=r(\mu)$, then we say that $\mu$ is a \textit{closed path based at} $v$. If, moreover, $s(e_j)\ne v$ for every $j>1$, then we call $\mu$ a \textit{closed simple path based at $v$}. 

If $\mu=e_1\cdots e_n$ is  a closed path based at $v$ and $s(e_i)\ne s(e_j)$ for every $i\ne j$,
then $\mu$ is called a \textit{cycle based at} $v$. If $\mu=e_1\cdots e_n$ is a cycle based at $v$, then for each $1\le i\le n$, the path $\mu_i=e_ie_{i+1}\cdots e_ne_1\cdots e_{i-1}$ is a cycle based at $s(e_i)$. We call the collection of cycles $\{\mu_i\}$ based at $s(e_i)$ the \textit{cycle of} $\mu$. A \textit{cycle} $c$ is a set of paths consisting of the cycle of $\mu$ for some cycle $\mu$ based at a vertex $v$. The \textit{length of a cycle} $c$ is the length of any path in $c$. In particular, a cycle of length $1$ is called a \textit{loop}. A cycle $c$ is  an \textit{exclusive cycle} if it is disjoint with every other cycle.

An \textit{exit} for a path $\mu=e_1\cdots e_n$ is an edge $e$ such that $s(e)=s(e_i)$ for some $i$ and $e\ne e_i$. A graph $E$ is said to satisfy the \textit{Condition $\mathrm{(L)}$} if every cycle in $E$ has an exit. 

For $u,v\in E^0$, if there exists a path $\mu$ such that  $u=s(\mu)$ and $v=r(\mu)$, then we write $u\geq v$. 

A subset $T$ of $E^0$ is said to have the \textit{countable separation property} if there exists a countable  set $C$ of $E^0$ with the property that for each $u\in T$ there is a $v\in C$ such that $u\geq v$.

For each $v\in E^0$ and $\mu\in{\rm Path}(E)$, we define $ M(v)=\{w\in E^0\;|\; w\geq v\}$ and $ M(\mu)=\{w\in E^0\;|\; w\geq v \text{ for some } v\in \mu^0\}$.

Let $H$ be a subset of $E^0$. We say that $H$ is \textit{hereditary}  if whenever $u\in H$ and $u\geq v$ for some vertex $v$, then $v\in H$; and $H$ is \textit{saturated} if, for any regular vertex $v$, $r(s^{-1}(v))\subseteq H$ implies $v\in H$. Let $H$ be a hereditary and saturated subset of $E^0$. A subset $M\subseteq H$ is said to satisfy MT-3 condition if for any $u, v\in M$ there exists $w\in M$ such that $u\geq w$ and $v\geq w$.

Let $H$ be a hereditary and saturated subset of $E^0$. A vertex $w$ is called a \textit{breaking vertex} of $H$ if $w\in E^0\backslash H$ is an infinite emitter such that $1\leq |s^{-1}(w)\cap r^{-1}(E^0\backslash H)|<\infty$. The set of all breaking vertices of $H$ is denoted by $B_H$. For each $w\in B_H$, $w^H$ denotes the element $w-\sum_{\{\substack{s(e)=w,\;r(e)\not\in H}\}}ee^*$. For a given hereditary and saturated subset $H$ of $E^0$  and a subset $S\subseteq B_H$, we call $(H,S)$ an \textit{admissible pair}, and denote by $I(H, S)$ the ideal of $L_K(E)$ generated by the set  $H\cup \{v^H: v\in S\}$. 
\begin{definition}[Quotient graph] \label{def:1.1}
	For each admissible pair $(H,S)$, the \textit{quotient graph} $E/(H,S)$ is defined as follows:
	$$ 
	\begin{aligned}
			& (E/(H,S))^0= (E^0\backslash H) \cup \{v': v\in B_H\backslash S\},\\
			& (E/(H,S))^1=\{e\in E^1: r(e)\not\in H\}\cup \{e': e\in E^1, r(e)\in B_H\backslash S\},
		\end{aligned}
	$$
	and $r$ and $s$ are extended to $(E/(H,S))^1$ by setting $s(e')=s(e)$ and $r(e')=r(e)'$. 
\end{definition}

The following remark will be employed frequently throughout the remaining part of the paper.
\begin{remark}\label{rem:1.2}
	It was shown in \cite[Theorems 2.4.15,  2.5.8]{Bo_abrams_2017} that the graded ideals of $L_K(E)$ are precisely the ideals of the form $I(H,S)$ for some admissible pair $(H,S)$. Moreover, the mapping $\varphi: L_K(E)\to L_K(E/(H,S))$, where $E/(H,S)$ is the quotient graph of $E$ by $(H,S)$, defined by
	\begin{align*}
			&	\varphi(v) =    \begin{cases} 
					v+v'        & \text{ if } v\not\in H \text{ and } v\in B_H\backslash S \\
					v             & \text{ if } v\not\in H \text{ and } v\not\in B_H\backslash S \\
					0             & \text{ if } v\in H
				\end{cases},\\
			&	\varphi(e) =    \begin{cases} 
					e+e'        & \text{ if } r(e)\not\in H \text{ and } r(e)\in B_H\backslash S \\
					e             &  \text{ if } r(e)\not\in H \text{ and } r(e)\not\in B_H\backslash S \\
					0             & \text{ if } r(e)\in H
				\end{cases},  \\                     					
			&		\varphi(e^*) =    \begin{cases} 
					e^*+(e')^*      & \text{if } \text{ if } r(e)\not\in H \text{ and } r(e)\in B_H\backslash S \\
					e^*        			&  \text{ if } r(e)\not\in H \text{ and } r(e)\not\in B_H\backslash S \\
					0        			  & \text{ if } r(e)\in H
				\end{cases}
		\end{align*}
	is an epimorphism with $\ker\varphi = I(H,S)$.
\end{remark}

Let $H$ be a hereditary and saturated subset of $E^0$, $c$ a cycle without exits in $E/(H,B_H)$ based at $v\in E^0\backslash H$, and $f(x)$ a polynomial in $K[x,x^{-1}]$. We denote by $I(H,B_H,f(c))$ the ideal of $L_K(E)$ generated by $I(H,B_H)$ and $f(c)$. Here $f(c)$ is the element of $L_K(E)$ obtained by substituting $x$ by $c$, $x^{-1}$ by $c^*$ and the constant term $a_0$ by $a_0v$ in the expression of $f(x)$ as a polynomial in $x, x^{-1}$.

\section{Simple modules over Leavitt path algebras}\label{simple modules}

In this section we briefly present some properties we need of classes of simple modules  over Leavitt path algebras introduced by Chen, Ara and Rangaswamy in   \cite{Pa_ara_rangas_2014} and \cite{Pa_chen_2015}.

(A): \textit{Chen's module $V_{[\mu]}$ defined by an infinite path $\mu$}:
Let $\mu=e_1e_2\cdots e_n\cdots$ be an infinite path, and $n$ a positive integer. In  \cite{Pa_chen_2015}, Chen defines $\tau_{\leq n}(\mu):=e_1\cdots e_n$ and $\tau_{> n}(\mu):=e_{n+1}e_{n+2}\cdots$. In addition, we set $\tau_{\leq 0}(\mu):=s(\mu)$ and $\tau_{> 0}(\mu):=\mu$. Two infinite paths $\mu$ and $\eta$ are said to be \textit{tail-equivalent} if there exist positive integers $m,n$ such that $\tau_{>m}(\mu)=\tau_{>n}(\eta)$. A simple checking shows that this is an equivalence relation. For an infinite path $\mu$, the symbol $[\mu]$ stands for the equivalence class of all paths which are  tail-equivalent to $\mu$. If $\mu=g^{\infty}:=ggg\cdots$, where $g$ is some  closed path, then we say that $\mu$ is \textit{rational}. Following Ara and Rangaswamy \cite{Pa_ara_rangas_2014}, if an infinite path $\mu$ is tail-equivalent to the rational path $c^\infty$, where $c$ is a cycle in $E$, then we say that $\mu$ \textit{ends in a cycle.} 

For an infinite path $\mu$, Chen defines 
$$
V_{[\mu]}=\bigoplus_{\eta\in[\mu]}K\eta,
$$ 
a $K$-vector space which has a $K$-basis $\{\eta:\eta\in [\mu]\}$. It was shown by Chen in \cite{Pa_chen_2015} that the following rules make $V_{[\mu]}$ into a simple left $L_K(E)$-module.

\begin{enumerate}
	\item[] $v\cdot \eta=\eta$ or $0$ according to $v=s(\eta)$ or not;
	\item[] $e\cdot \eta =e\eta$ or $0$ according to $r(e)=s(\eta)$ or not;
	\item[] $e^*\cdot \eta=\tau_{>1}(\eta)$ or $0$ according to $\eta=e\eta'$ or not.
\end{enumerate}

(B): \textit{Chen's module $\mathbf{N}_w$ defined by a sink $w$}: Let $w$ be a sink in a graph $E$. Following Chen \cite{Pa_chen_2015}, we denote by $\mathbf{N}_w$ the $K$-vector space whose basis consists of all finite paths ending in $w$; that is,
$$
\mathbf{N}_w=\bigoplus_{\substack{\eta\in{\rm Path}(E),\\ r(\eta)=w}}K\eta.
$$
Then, $\mathbf{N}_w$ becomes a simple left $L_K(E)$-module by defining an action on its basis elements in the same way we have done for $V_{[\mu]}$ (with the addition of $e^*\cdot w=0$ for all $e\in E^1$).  From  \cite[Theorem 3.3(1) and Theorem 3.7(1)]{Pa_chen_2015}, we can state the following result.
\begin{lemma}\label{lemma_2.1}
	Let $\mu$ be an infinite path, and $w$ a sink in $E$. Let $V_{[\mu]}$ and $\mathbf{N}_w$ be the $L_K(E)$-modules defined above. Then, 
	$${\rm End}_{L_K(E)}(V_{[\mu]})\cong K \hbox{ and } \;\;{\rm End}_{L_K(E)}(\mathbf{N}_w)\cong K.$$
\end{lemma}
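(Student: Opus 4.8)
The plan is to show directly that every $L_K(E)$-module endomorphism of $V_{[\mu]}$ (respectively $\mathbf{N}_w$) is multiplication by a scalar in $K$; since scalar multiplications are visibly $L_K(E)$-linear, this produces the required $K$-algebra isomorphisms. Both arguments follow the same template: single out a distinguished basis vector $x$, prove that any endomorphism $f$ must send $x$ into $Kx$, and then invoke simplicity of the module, so that $L_K(E)\cdot x$ is everything. Indeed, once $f(x)=cx$, linearity gives $f(a\cdot x)=a\cdot f(x)=c\,(a\cdot x)$ for all $a\in L_K(E)$, so $f=c\cdot\mathrm{id}$. (Schur's lemma already tells us the endomorphism ring is a division ring; the point is to pin it down as $K$.)

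For $V_{[\mu]}$ I would take the distinguished vector to be $\mu$ itself. First I record how finite initial segments act: writing $\mu=e_1e_2\cdots$ and $\alpha_n:=e_1\cdots e_n$, a direct computation from the action rules, using relation (CK1), shows that $\alpha_n\alpha_n^{*}$ acts on the basis as the projection onto the span of those $\eta\in[\mu]$ whose first $n$ edges coincide with $e_1\cdots e_n$; concretely $\alpha_n^{*}\cdot\eta=\tau_{>n}(\eta)$ when $\tau_{\le n}(\eta)=\alpha_n$ and $0$ otherwise, while $\alpha_n\cdot\tau_{>n}(\eta)=\eta$. Now let $f\in\mathrm{End}_{L_K(E)}(V_{[\mu]})$ and expand $f(\mu)=\sum_i c_i\eta_i$ as a \emph{finite} sum of distinct basis vectors (finite because $f(\mu)$ lies in the direct sum $V_{[\mu]}$). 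Since $\mu=\alpha_n\alpha_n^{*}\cdot\mu$ for every $n$, applying $f$ gives $f(\mu)=\alpha_n\alpha_n^{*}\cdot f(\mu)$, so each $\eta_i$ with $c_i\ne 0$ must agree with $\mu$ in its first $n$ edges, for every $n$. As two distinct infinite paths differ in some finite initial segment, this forces $\eta_i=\mu$, whence $f(\mu)\in K\mu$; simplicity then finishes this case.

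For $\mathbf{N}_w$ the distinguished vector is the length-$0$ path $w$, and the sink hypothesis makes the argument shorter. Given $f\in\mathrm{End}_{L_K(E)}(\mathbf{N}_w)$, from $w\cdot w=w$ we get $f(w)=w\cdot f(w)$, so $f(w)$ is supported on basis paths $\eta$ with $s(\eta)=w$. But $w$ emits no edges, so the only finite path with source $w$ is the trivial path $w$ itself; hence $f(w)=cw$ for some $c\in K$, and simplicity again yields $f=c\cdot\mathrm{id}$.

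The routine parts are the bookkeeping verifications that the indicated elements act as asserted. The one genuinely load-bearing point is the passage ``$f(\mu)$ is fixed by all the projections $\alpha_n\alpha_n^{*}$, hence is supported on a single path.'' This works precisely because $f(\mu)$ has finite support while the projections form an infinite chain that separates any two distinct infinite paths, so the finiteness of the expansion of $f(\mu)$ is the hypothesis I would be most careful to keep in view.
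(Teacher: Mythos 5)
Your proof is correct, but note that the paper does not actually prove Lemma \ref{lemma_2.1} at all: it states it as a consequence of Chen's Theorems 3.3(1) and 3.7(1) in \cite{Pa_chen_2015}. So your argument is a self-contained substitute for a citation rather than a parallel of the paper's text. The natural comparison is with the paper's proofs of the companion Lemmas \ref{lemma_2.2} and \ref{lemma_2.3}, which follow a different template: there the authors show $\varphi(p)=\lambda_p p$ for \emph{every} basis element $p$ (by testing against dual paths such as $p_1'^*$ or $\eta_1^*$ and deriving a contradiction from $0\ne\lambda_1 v$ or $0\ne\mu$), and then separately prove that the scalar $\lambda_p$ does not depend on $p$. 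You instead pin $f$ down on a single distinguished generator ($\mu$, resp.\ $w$) and let simplicity do the rest via $V=L_K(E)\cdot x$; this is cleaner and eliminates the scalar-independence step entirely, and your appeal to simplicity is legitimate since the paper takes Chen's simplicity results as given. Your tower of projections $\alpha_n\alpha_n^*$ is also exactly what makes the argument work for an \emph{arbitrary} infinite path: the device used in Lemma \ref{lemma_2.2}, writing every element of $[\mu]$ as $p'\mu$ with $p'$ a finite path, is only available when $\mu=c^\infty$ is rational, whereas a general $\eta\in[\mu]$ has only the form $\beta\,\tau_{>n}(\mu)$. Both of your computational claims (that $\alpha_n\alpha_n^*$ acts as the stated projection, and that the sink $w$ supports no nontrivial path with source $w$) check out against the module actions defined in Section \ref{simple modules}, so the proposal stands as a complete proof.
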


We note that the notations $V_{[\mu]}$ and $\mathbf{N}_w$ were actually used in \cite{Pa_ara_rangas_2014} to indicate respectively the modules $\mathcal{F}_{[\mu]}$ and $\mathcal{N}_w$, which were originally defined in \cite{Pa_chen_2015} by Chen.

(C): \textit{Ara-Rangaswamy twisted module $V_{[\mu]}^f$}: Let $A$ be an algebra over a field $K$ and $M$ an $A$-module. Given an automorphism $\sigma$ of $K$-algebra $A$, we can define the twisted $A$-module $M^\sigma$ as follows.
\begin{enumerate}
	\item[] $M^\sigma=M$ as a vector space over $K$,
	\item[] $a\cdot m^\sigma=(\sigma(a)m)^\sigma$.
\end{enumerate}
(Here, for an element $m\in M$, the symbol $m^{\sigma}$ stands for the corresponding element in $M^\sigma$.) One can easily check that ${\rm End}_A(M) = {\rm End}_A(M^\sigma)$. Moreover, the twisted module $M^\sigma$ is simple if and only if $M$ is simple. In \cite{Pa_chen_2015}, Chen has studied intensively the twisted modules $V_{[\mu]}^{\mathbf{a}}$ and $\mathbf{N}_w^{\mathbf{a}}$ of $V_{[\mu]}$ and $\mathbf{N}_w$ respectively, where ${\mathbf{a}}$ is an appropriate automorphism of $L_K(E)$.\\

As a modification of Chen's construction, in \cite{Pa_ara_rangas_2014}, Ara and Rangaswamy introduced the new simple $L_K(E)$-module $V_{[\mu]}^f$, which can be described briefly as follows. Let $f(x)=1+a_1x+\dots+a_nx^n$, $n\geq1$, be an irreducible polynomial in $K[x,x^{-1}]$ and let $c=e_1e_2\cdots e_m$ be an exclusive cycle in $E$. Set $\mu=c^\infty$. Let $K'=K[x,x^{-1}]/(f(x))$, which is a field extension of $K$. Then, we can form the Leavitt path algebra $L_{K'}(E)$ over the field $K'$. Let $\bar{x}$ be the image of $x$ in $K'$. It is clear that $\bar{x}\ne 0$ in $K'$, and so $\bar{x}$ is invertible in $K'$. Therefore, we can define a map $\sigma: L_{K'}(E)\to L_{K'}(E)$ which sends 

\begin{enumerate}
	\item[] $v\to v$ for $v\in E^0$,
	\item[] $e\to e$ and $e^*\to e^*$ for $e\in E^1$ with $e\ne e_1$,
	\item[] $e_1\to \bar{x}e_1$ and $e_1^*\to \bar{x}^{-1}e_1^*$.
\end{enumerate}
It is a simple matter to check that $\sigma$ is an automorphism of $L_{K'}(E)$, and so we can define the twisted $L_{K'}(E)$-module $V_{[\mu]}^\sigma$ of the $L_{K'}(E)$-module $V_{[\mu]}$. In \cite{Pa_ara_rangas_2014}, Ara and Rangaswamy denote by $V_{[\mu]}^f$ the $L_K(E)$-module obtained by restricting scalars on $V_{[\mu]}^\sigma$ from $L_{K'}(E)$ to $L_K(E)$. Also, it was proved that $V_{[\mu]}^f$ is a simple $L_K(E)$-module. Now, for further using, we need to calculate ${\rm End}_{L_K(E)}(V_{[\mu]}^f)$. The next two lemmas are introduced as key steps toward proving the main result (Proposition~\ref{proposition_2.4}) of this section.

\begin{lemma}\label{lemma_2.2}
	Let $c=e_1e_2\cdots e_m$ be an exclusive cycle in a graph $E$ and $\mu=c^\infty$. Let $f(x)=1+a_1x+\dots+a_nx^n$, $n\geq1$, be an irreducible polynomial in $K[x,x^{-1}]$ and let $K'=K[x,x^{-1}]/(f(x))$. Let $V_{[\mu]}^f$ be the simple $L_K(E)$-module defined as above. Then, 
	$${\rm End}_{L_K(E)}(V_{[\mu]}^f)\cong K'.$$
\end{lemma}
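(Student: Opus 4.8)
The plan is to compute the endomorphism ring directly, exploiting the simplicity of $V_{[\mu]}^f$ together with the idempotent $v:=s(c)\in E^0$. Write $\mu=c^\infty$ and recall that in $V_{[\mu]}^f$ the action is the $\sigma$-twisted one, so that for $a\in L_K(E)$ one has $a\cdot m=\sigma(a)m$, the right-hand side being computed in the untwisted $L_{K'}(E)$-module $V_{[\mu]}$. Since $s(c^\infty)=v$ we get $v\cdot c^\infty=c^\infty$, and because $V_{[\mu]}^f$ is simple the single vector $c^\infty$ generates it as an $L_K(E)$-module. Consequently any $\theta\in {\rm End}_{L_K(E)}(V_{[\mu]}^f)$ is completely determined by the value $\theta(c^\infty)$, and from $\theta(c^\infty)=\theta(v\cdot c^\infty)=v\cdot\theta(c^\infty)$ this value must lie in the subspace $v\,V_{[\mu]}^f$.

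The heart of the argument is therefore to identify $v\,V_{[\mu]}^f$. By construction this space is the $K'$-span of those basis paths $\eta\in[\mu]$ with $s(\eta)=v$, and I claim that $c^\infty$ is the only such path, so that $v\,V_{[\mu]}^f=K'\,c^\infty$. Indeed, a path tail-equivalent to $c^\infty$ is eventually one of the periodic paths obtained by rotating $c$; among these rotations only $c^\infty$ itself has source $v$, because the vertices $s(e_1),\dots,s(e_m)$ of the cycle $c$ are pairwise distinct. Any remaining candidate would have the form $p\,\eta_0$ for a nonempty finite path $p$ with $s(p)=v$ and $r(p)$ a vertex of $c$, with $\eta_0$ a rotation of $c^\infty$; concatenating $p$ with the arc of $c$ running from $r(p)$ back to $v$ produces a closed path at $v$, hence a cycle meeting $c$ and distinct from it, unless $p$ is already an initial arc of $c$ (in which case $p\,\eta_0=c^\infty$). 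Since $c$ is an \emph{exclusive} cycle it is disjoint from every other cycle, so no genuinely new $\eta$ can occur. This is the step I expect to be the main obstacle, since it is precisely where the exclusivity hypothesis must be invoked to rule out re-entry paths into the cycle.

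Granting $v\,V_{[\mu]}^f=K'\,c^\infty$, the conclusion is quick. First, every $\beta\in K'$ yields an $L_K(E)$-endomorphism of $V_{[\mu]}^f$, namely multiplication by the scalar $\beta$: for $a\in L_K(E)$ one has $\beta\,(a\cdot m)=\beta\,\sigma(a)m=\sigma(a)\,\beta m=a\cdot(\beta m)$, because $K'$ is central in $L_{K'}(E)$ and the operator $\sigma(a)$ is $K'$-linear. This gives an injective $K$-algebra homomorphism $K'\hookrightarrow {\rm End}_{L_K(E)}(V_{[\mu]}^f)$. (Equivalently, this inclusion follows conceptually from Lemma~\ref{lemma_2.1} applied over the field $K'$, which gives ${\rm End}_{L_{K'}(E)}(V_{[\mu]})\cong K'$, combined with the fact recorded above that twisting by $\sigma$ leaves the endomorphism ring unchanged.) Conversely, given $\theta\in{\rm End}_{L_K(E)}(V_{[\mu]}^f)$, the identification of $v\,V_{[\mu]}^f$ forces $\theta(c^\infty)=\beta\,c^\infty$ for a unique $\beta\in K'$; since both $\theta$ and multiplication by $\beta$ are $L_K(E)$-endomorphisms agreeing on the generator $c^\infty$, they coincide on all of $V_{[\mu]}^f$. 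Hence $\theta$ is multiplication by $\beta$, the homomorphism $K'\hookrightarrow{\rm End}_{L_K(E)}(V_{[\mu]}^f)$ is also surjective, and therefore ${\rm End}_{L_K(E)}(V_{[\mu]}^f)\cong K'$.
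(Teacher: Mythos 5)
Your proof is correct, and it takes a genuinely different route from the paper's. The paper (following Chen and Ara--Rangaswamy) works basis element by basis element: for each $p\in[\mu]$ it writes $\varphi(p)=\sum_i\lambda_ip_i$, decomposes each $p_i=p_i'\mu$ with $p_i'$ avoiding $e_1$ (so that $\sigma$ fixes $p_i'^*$ and the twisted action of $p_i'^*$ agrees with the untwisted one), kills the off-diagonal terms by applying the ghost path $p_1'^*$, and then shows the resulting scalar $\lambda_p$ is independent of $p$ using injectivity of $\varphi$. You instead localize at the idempotent $v=s(c)$: the identification $v\,V_{[\mu]}^f=K'c^\infty$, together with simplicity (so $c^\infty$ generates the module) and the observation that $K'$-scalars commute with the twisted action, pins every endomorphism down to multiplication by a unique $\beta\in K'$. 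Your version is more module-theoretic and shorter on the algebra side, and it has the merit of making surjectivity of $\mathrm{End}_{L_K(E)}(V_{[\mu]}^f)\to K'$ explicit (the paper never actually verifies that each $\lambda\in K'$ is realized by an endomorphism); the price is that all the work is concentrated in the combinatorial claim that $c^\infty$ is the only path in $[\mu]$ with source $v$, which is exactly where exclusivity enters. Both proofs ultimately rest on that same structural consequence of exclusivity: the paper's assertion that $p_1'^*p'\mu=0$ whenever $p'\ne p_1'$ would likewise fail if $p_1'$ could be a proper prefix of $p'$, and ruling that out needs the same fact that every closed path based at $v$ begins by running along $c$.

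One step of yours is compressed and deserves care: from the closed path obtained by concatenating $p$ with the return arc of $c$ you cannot directly extract ``a cycle meeting $c$ and distinct from it,'' because a closed path need not be a cycle, and a naive extraction can simply return $c$ itself (for instance when $p$ first runs along $c$, then makes an excursion, and rejoins). The correct argument is to take the first edge $g$ of $p$ that departs from $c^\infty$ (so $s(g)\in c^0$ but $g\notin c^1$), follow the path until its first return to a vertex of $c$, close up along $c$, and then take a shortest closed path based at $s(g)$ whose first edge is $g$; that shortest closed path is a cycle, it meets $c$ at $s(g)$, and it differs from $c$ since $g\notin c^1$, contradicting exclusivity. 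With this detail filled in, your identification $v\,V_{[\mu]}^f=K'c^\infty$ is sound and the rest of your argument goes through.
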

\begin{proof}
	We follow the proofs of \cite[Theorem 3.3(i)]{Pa_chen_2015} and \cite[Lemma 3.3]{Pa_ara_rangas_2014}. Let $p\in [\mu]$ and write 
	$$
	\varphi(p)=\sum_{i=1}^{m}\lambda_i p_i,
	$$
	where $\lambda_i\in K'\backslash\{0\}$ and $p_i$ distinct paths in $[\mu]$. Then, we can uniquely write $p_i=p'_i\mu$, where $p'_i$ is a finite path (possibly of length $0$), which does not involve $e_1$. We claim that $p_i=p$ for all $i\in\{1,\dots,m\}$. First, we prove that $p_1=p$. Assume, by contradiction, that $p_1\ne p$. Then, we can write $p=p'\mu$, where $p'\ne p_1'$. Consequently, we have $p_1'^*p=p_1'^*p'\mu=0$. Let $\cdot$ denote the action of $L_K(E)$ on $V_{[\mu]}^f$. Since $p_1'$ does not involve $e_1$, it follows that $\sigma(p_1'^*)=p_1'^*$, which implies that $p_1'^*u=p_1'^*\cdot u$ for all $u\in V_{[\mu]}^f$. Moreover, as $p_i'$'s are distinct, we have $p_1'^*p_i=p_1'^*p_i'\mu=0$ for all $i\ne 1$. It follows that 
	$$
	0=\varphi(p_1'^*p)=p_1'^*\varphi( p)=p_1'^*\sum_{i=1}^{m}\lambda_i p_i=\lambda_1p_1'^*p_1=\mu\ne0.
	$$
	This contradiction shows that $p_1=p$. Similarly, we also obtain that $p_i=p$ for all $i\in\{2,\dots m\}$. We have thus proven that for any $p\in[\mu]$, 
	$$
	\varphi(p)=\lambda_p p, \text{ for some } \lambda_p\in K'.
	$$
	Next, we claim that the scalar $\lambda_p$ does not depend on $p$. Take another $q\in [\mu]$ and write $\varphi(q)=\lambda_qq$. Then, we can write $p=p'\mu$ and $q=q'\mu$ where $p'$ and $q'$ do not involve $e_1$. Then
	$$
	\varphi(\lambda_p^{-1}\mu)=\varphi(\lambda_p^{-1}p'^*p)=\lambda_p^{-1}p'^*\varphi(p)=\lambda_p^{-1}p'^*\lambda_pp=\mu.
	$$
	Similarly, we get $\varphi(\lambda_q^{-1}\mu)=\mu$. It follows that $\varphi(\lambda_p^{-1}\mu)=\varphi(\lambda_q^{-1}\mu)$. Because $V_{[\mu]}^f$ be the simple $L_K(E)$-module, we conclude that $\varphi$ is injective, which implies that $\lambda_p=\lambda_q$, and the claim is shown. Let $\lambda_{\varphi}\in K'$ denote the common value. So $\varphi(u)=\lambda_{\varphi} u$, for all $u\in V_{[\mu]}^f$. Finally, the map ${\rm End}_{L_K(E)}(V_{[\mu]}^f)\to K'$, defined by $\varphi\mapsto \lambda_{\varphi}$, is the required isomorphism.
\end{proof}

(D): \textit{Ara-Rangaswamy module $\mathbf{S}_{v\infty}$ defined by an infinite emitter $v$}: Let $v$ be an infinite emitter in $E$. In \cite{Pa_rangas_2015}, Ara and Rangaswamy defined $\mathbf{S}_{v\infty}$ to be the $K$-vector space whose basis is the set $B=\{\mu: \mu \hbox{ is a path in } E \hbox{ with } r(\mu)=v\}$; that is,
$$
\mathbf{S}_{v\infty}=\bigoplus_{\substack{\mu\in{\rm Path}(E), \\ r(\mu)=v}}K\mu.
$$
For each vertex $u$ and each edge $e$ in $E$, they define linear transformations $P_u$, $S_e$ and $S_{e^*}$ on $\mathbf{S}_{v\infty}$ as follows. For any $\mu\in B$, set

\begin{align*}
	P_u(\mu) =    \begin{cases} 
		\mu        & \text{if } u=s(\mu) \\
		0        & \text{otherwise }
	\end{cases} ,                        					
	& \hspace{1cm} S_e(\mu) =    \begin{cases} 
		e\mu      & \text{if } r(e)=s(\mu) \\
		0        & \text{otherwise }
	\end{cases},\\
	S_{e^*}(\mu) =    \begin{cases} 
		\mu'       & \text{if } \mu=e\mu' \\
		0        & \text{otherwise }
	\end{cases},										
	& \hspace{1cm} S_{e^*}(v) =   0.
\end{align*}
The mapping $\phi: L_K(E) \to {\rm End}_K(\mathbf{S}_{v\infty})$, which sends $u$ to $P_u$, $e$ to $S_e$ and $e^*$ to $S_{e^*}$, is an algebra homomorphism. Therefore, $\mathbf{S}_{v\infty}$ can be viewed as a left module over $L_K(E)$ via $\phi$.  In \cite{Pa_rangas_2015}, Rangaswamy proved that $\mathbf{S}_{v\infty}$ is a simple left $L_K(E)$-module. For further use, we need to compute ${\rm End}_K(\mathbf{S}_{v\infty})$.
\begin{lemma}\label{lemma_2.3}
	If  $v$ is an infinite emitter in a graph $E$, then $
	{\rm End}_{L_K(E)}\left( \mathbf{S}_{v\infty}\right) \cong K.$
\end{lemma}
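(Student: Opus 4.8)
The plan is to mimic the structure of the proof of Lemma~\ref{lemma_2.2} (and of the endomorphism computations for $V_{[\mu]}$ and $\mathbf{N}_w$), adapting it to the module $\mathbf{S}_{v\infty}$. The target is to show that every $L_K(E)$-endomorphism of $\mathbf{S}_{v\infty}$ is scalar multiplication by some element of $K$. Since $\mathbf{S}_{v\infty}$ is simple, by Schur's lemma its endomorphism ring is a division algebra over $K$, so it suffices to exhibit a $K$-algebra embedding of ${\rm End}_{L_K(E)}(\mathbf{S}_{v\infty})$ into $K$; the map $\varphi\mapsto\lambda_\varphi$ will do this once we know each $\varphi$ acts as a single scalar.

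First I would fix an $L_K(E)$-linear endomorphism $\varphi$ of $\mathbf{S}_{v\infty}$ and analyze its effect on the distinguished basis vector $v\in B$ (the trivial path at $v$, which lies in $B$ since $r(v)=v$). Writing $\varphi(v)=\sum_{i}\lambda_i\mu_i$ as a finite $K$-combination of distinct basis paths $\mu_i$ with $r(\mu_i)=v$, I would apply the vertex action: since $v\cdot v=P_v(v)=v$, $L_K(E)$-linearity gives $\varphi(v)=\varphi(v\cdot v)=v\cdot\varphi(v)=P_v(\varphi(v))=\sum_i\lambda_i P_v(\mu_i)$, and $P_v(\mu_i)=\mu_i$ only when $s(\mu_i)=v$, forcing every surviving $\mu_i$ to be a closed path at $v$. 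To pin down that $\varphi(v)$ is a scalar multiple of $v$ itself, I would use the ghost-edge action $S_{e^*}$: for any path $\mu_i=e\mu'$ of positive length we have $e^*\cdot\mu_i=\mu'$, whereas $e^*\cdot v=0$; choosing appropriate edges $e$ and comparing $\varphi(e^*\cdot v)=e^*\cdot\varphi(v)$ (the left side being $0$ since $S_{e^*}(v)=0$) should eliminate all positive-length contributions, yielding $\varphi(v)=\lambda v$ for a unique $\lambda=\lambda_\varphi\in K$.

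Next I would propagate this single scalar to the whole module. For an arbitrary basis path $\mu$ with $r(\mu)=v$, I would express $\mu$ in terms of $v$ through the edge actions: $\mu=S_\mu(v)=\mu\cdot v$ (applying the edges of $\mu$ successively via $S_e$), so that $\varphi(\mu)=\varphi(\mu\cdot v)=\mu\cdot\varphi(v)=\mu\cdot(\lambda v)=\lambda\,\mu$. Thus $\varphi$ acts as multiplication by the \emph{same} $\lambda$ on every basis element, hence $\varphi=\lambda\cdot{\rm id}$ on all of $\mathbf{S}_{v\infty}$. The assignment $\varphi\mapsto\lambda_\varphi$ is then readily checked to be a $K$-algebra homomorphism ${\rm End}_{L_K(E)}(\mathbf{S}_{v\infty})\to K$, and it is injective because $\lambda_\varphi=0$ forces $\varphi=0$; it is surjective since every scalar map is an endomorphism. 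This establishes the isomorphism ${\rm End}_{L_K(E)}(\mathbf{S}_{v\infty})\cong K$.

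The main obstacle I anticipate is the step that rules out positive-length paths in $\varphi(v)$, i.e.\ showing $\varphi(v)\in Kv$ rather than merely lying in the span of closed paths at $v$. Because $v$ is an infinite emitter (not regular), the relation (CK2) does not apply at $v$, so one cannot decompose $v$ as $\sum ee^*$; the argument must instead rely purely on the explicit action $S_{e^*}(v)=0$ together with injectivity of $\varphi$ (from simplicity) to force the length-zero term to be the only survivor. Care is needed to handle the possibility that $E$ contains closed paths based at $v$, which is precisely where the infinite-emitter hypothesis and the special rule $S_{e^*}(v)=0$ must be exploited to break any would-be nontrivial scalar behavior coming from cycles at $v$.
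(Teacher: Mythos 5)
Your proof is correct, and it takes a genuinely different route from the paper's. The paper fixes an \emph{arbitrary} basis path $\mu\in B$, expands $\varphi(\mu)=\sum_i k_i\eta_i$, kills the cross terms via the claimed orthogonality relation $\eta^*\mu=\delta_{\eta\mu}v$ for $\eta,\mu\in B$, and then proves the resulting scalar is independent of $\mu$ by the computation $\varphi(\mu)=\varphi(\mu\eta^*\eta)=\mu\eta^*\varphi(\eta)$. You instead anchor everything at the cyclic generator $v$: the relations $v\cdot v=v$ and $e^*\cdot v=0$ force $\varphi(v)=\lambda v$ (for each edge $e$, the equation $e^*\cdot\varphi(v)=0$ kills every term with initial edge $e$, because distinct paths sharing the initial edge $e$ stay distinct, and stay inside the basis $B$, after stripping $e$), and then module linearity propagates the scalar in one line: $\varphi(\mu)=\varphi(\mu\cdot v)=\mu\cdot\varphi(v)=\lambda\mu$. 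Besides being shorter in its second step, your route is actually more robust. The paper's ``easy to check'' identity $\mu^*\eta=\delta_{\mu\eta}v$ is false whenever $v$ lies on a closed path, which the hypothesis ``$v$ is an infinite emitter'' does not exclude: if $e$ is a loop at $v$, then $v,e\in B$ and $v^*\cdot e=e\ne 0$, and more generally $\mu^*\cdot(\mu\kappa)=\kappa\ne 0$ for any nontrivial closed path $\kappa$ based at $v$. So the paper's contradiction argument has a gap exactly in that case, whereas your argument --- which uses only the defining rule $S_{e^*}(v)=0$ and linear independence of the basis --- covers all infinite emitters uniformly. One small simplification: you do not need simplicity or injectivity of $\varphi$ anywhere; linear independence of the basis paths already forces the positive-length coefficients to vanish, and surjectivity/injectivity of the map $\varphi\mapsto\lambda_\varphi$ is immediate.
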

\begin{proof}
	Assume that $\mu, \eta\in B$. It is easy to check that $\mu^*\eta=\delta_{\mu\eta}v$.  Consider a nonzero element $\varphi\in {\rm End}_{L_K(E)}(S_{v\infty})$. Then, $\varphi$ is an automorphism because $S_{v\infty}$ is a simple $L_K(E)$-module. For an arbitrary $\mu\in B$, let
	$$
	\varphi(\mu)=\sum_{i=1}^\ell k_i\eta_i,
	$$
	where $k_i\in K\backslash\{0\}$ and $p_i$'s are pairwise distinct. We claim that $\eta_i=\mu$ for all $i\in\{1,\dots,l\}$. First, we prove that $\eta_1=\mu$. Assume, by contradiction that $\eta_1\ne \mu$, then we get $\eta_1^*\mu=0$. So, we get 
	$$
	0=\varphi(\eta_1^*\mu)=\eta_1^*\varphi(\mu)=\eta_1^*\sum_{i=1}^\ell k_i\eta_i,=k_1\eta_1^*\eta_1=k_1v\ne 0.
	$$
	This contradiction implies that $\eta_1=\mu$. Similarly, we also get that $\eta_i=\mu$ for all $i\geq 2$. Hence, $\varphi(\mu)=k_{\mu}\mu$ for some $k_{\mu}\in K$. We claim that $k_{\mu}$ depends only on $\varphi$, not on $\mu$. Indeed, for any $\eta\in B$, we can write $\varphi(\eta)=k_{\eta}\eta$ for some $k_{\eta}\in K$. Since $r(\mu)=r(\eta)=v$, we have
	$$
	k_{\mu}\mu=\varphi(\mu)=\varphi(\mu v)=\varphi(\mu\eta^*\eta)=\mu\eta^*\varphi(\eta)=\mu\eta^*(k_{\eta}\eta)=k_{\eta}\mu(\eta^*\eta)=k_{\eta}\mu,
	$$
	and it follows that $k_{\mu}=k_{\eta}$. The claim is shown. Therefore, we conclude that for every nonzero element $\varphi\in {\rm End}_{L_K(E)}(S_{v\infty})$, there exists a unique element, say, $\lambda_{\varphi}\in K$ such that $\varphi(\mu)=\lambda_{\varphi}\mu$ for all $\mu\in B$. If we set additionally  $\lambda_0=0$, then, clearly, the assignment $\varphi\mapsto \lambda_{\varphi}$ defines an isomorphism between ${\rm End}_{L_K(E)}(S_{v\infty})$ and $K$. 
\end{proof}

Now, we are ready to prove the following proposition  which is the main result of this section.

\begin{proposition}\label{proposition_2.4}
	Let $E$ be an arbitrary graph, $K$ an arbitrary field, and $P$ any primitive ideal of $L_K(E)$. Then, there exists a simple left module $V$ which is one of the types $V_{[\mu]}$, $\mathbf{N}_w$, $V_{[\mu]}^f$, and $\mathbf{S}_{v\infty}$ such that ${\rm Ann}_{L_k(E)}(V)=P$ and  ${\rm End}_{L_K(E)}(V)$ is a field containing $K$. 
\end{proposition}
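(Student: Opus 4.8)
The plan is to use the classification of the primitive ideals of $L_K(E)$ to determine the shape of $P$, and then to write down, \emph{directly over $E$}, a simple module of the appropriate one of the four listed types and to prove that its annihilator is $P$. Once the type has been pinned down, the assertion on endomorphism rings is immediate: $V_{[\mu]}$, $\mathbf{N}_w$ and $\mathbf{S}_{v\infty}$ have endomorphism ring $K$ by Lemmas~\ref{lemma_2.1} and \ref{lemma_2.3}, while $V_{[\mu]}^f$ has endomorphism ring the field $K'=K[x,x^{-1}]/(f(x))\supseteq K$ by Lemma~\ref{lemma_2.2}; all four are fields containing $K$.

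I would set $H=P\cap E^0$, which is hereditary and saturated, and put $M=E^0\setminus H$. By the classification of primitive ideals (see \cite{Bo_abrams_2017}), either $P=I(H,B_H)$ is graded, with $M$ a maximal tail satisfying the MT-3 condition, Condition~$\mathrm{(L)}$ and the countable separation property, or $P$ is non-graded. In the graded case I look for a faithful-type module attached to $M$, and here a key observation is that, by saturation, any vertex of $M$ emitting no edge with range inside $M$ must in fact be a sink or an infinite emitter of $E$ (a regular vertex all of whose edges landed in $H$ would be forced into $H$). Consequently: if $M$ contains a sink $w$ of $E$ I take $\mathbf{N}_w$; if $M$ contains an infinite emitter $v$ I take $\mathbf{S}_{v\infty}$; and if every vertex of $M$ emits an edge back into $M$, I use the countable separation property to build an infinite path $\mu$ lying in $M$ and passing cofinally through a separating set, and take the Chen module $V_{[\mu]}$.

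In the non-graded case the classification gives $P=I(H,B_H,f(c))$ with $f$ irreducible and $c$ a cycle without exits in $E/(H,B_H)$. Any edge leaving a vertex of $c$ in $E$ but not in the quotient must enter the hereditary set $H$ and hence cannot return to $c$, so $c$ is an exclusive cycle of $E$; I then set $\mu=c^\infty$ and take the Ara--Rangaswamy twisted module $V_{[\mu]}^f$. It remains, in every case, to identify the annihilator precisely with $P$, and for this I would invoke the computations of ${\rm Ann}(\mathbf{N}_w)$, ${\rm Ann}(\mathbf{S}_{v\infty})$, ${\rm Ann}(V_{[\mu]})$ and ${\rm Ann}(V_{[\mu]}^f)$ due to Chen~\cite{Pa_chen_2015} and to Ara and Rangaswamy~\cite{Pa_ara_rangas_2014}, paying particular attention to the breaking-vertex contribution encoded in $B_H$.

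The main obstacle is the graded case in which $M$ has neither sinks nor infinite emitters: there one must genuinely construct an infinite path whose Chen module $V_{[\mu]}$ is faithful over the primitive quotient, and the countable separation property is exactly the hypothesis that makes this possible, since it is what allows $\mu$ to be steered so that no nonzero element of the quotient annihilates all of $V_{[\mu]}$. The accompanying technical difficulty, common to all cases, is the exact matching of the module annihilators with the graph-theoretic ideals $I(H,B_H)$ and $I(H,B_H,f(c))$, which is precisely where the detailed annihilator computations and the careful bookkeeping of breaking vertices enter.
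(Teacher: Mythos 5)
Your treatment of the endomorphism rings coincides with the paper's (the paper, too, gets this from Lemmas \ref{lemma_2.1}, \ref{lemma_2.2} and \ref{lemma_2.3}), but for the existence of $V$ the paper simply cites \cite[Propositions 2.6 and 2.7]{Pa_rangas_2015} and \cite[Theorem 3.9]{Pa_ara_rangas_2014}, whereas you attempt to reconstruct those results; it is in that reconstruction that there are genuine gaps. The first is that your case analysis of primitive ideals is incomplete. For an arbitrary graph the graded primitive ideals come in two kinds (Theorem \ref{th:4.1}): type I, namely $P=I(H,B_H\setminus\{w\})$ for a breaking vertex $w\in B_H$ with $M(w)=E^0\setminus H$, and type II, namely $P=I(H,B_H)$ with the MT-3, Condition (L) and countable separation hypotheses. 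Your dichotomy ``either $P=I(H,B_H)$ graded with those properties, or $P$ non-graded'' is valid only for row-finite graphs, where $B_H=\varnothing$; it omits type I entirely, and type I ideals are precisely the ones that must be realized as $\mathrm{Ann}(\mathbf{S}_{w\infty})$ for the distinguished breaking vertex $w$. So a whole family of primitive ideals is never matched with a module in your argument.

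The second gap is that your selection rule inside the graded case is wrong for infinite emitters. One has $\mathrm{Ann}(\mathbf{S}_{v\infty})=I\left(E^0\setminus M(v),\,B_{E^0\setminus M(v)}\setminus\{v\}\right)$, so for this to equal $P=I(H,B_H)$ you need both $M(v)=E^0\setminus H$ and that the contribution of $v$ as a breaking vertex is not lost; neither follows from ``$M$ contains an infinite emitter $v$.'' MT-3 does force $M=M(w)$ when $w$ is a sink (since $w\geq z$ implies $z=w$), which is why your sink case is fine, but it does not force $M=M(v)$ for an infinite emitter. Concretely, let $E^0=\{u,v,z\}$ with one edge from $u$ to $z$, infinitely many edges from $v$ to $z$, and two loops at $z$. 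Then $P=0=I(\varnothing,\varnothing)$ is primitive of type II and $M=E^0$ contains the infinite emitter $v$, so your rule selects $\mathbf{S}_{v\infty}$; but the only path ending at $v$ is $v$ itself, hence $\mathbf{S}_{v\infty}=Kv$ and $u\cdot\mathbf{S}_{v\infty}=0$, so $\mathrm{Ann}(\mathbf{S}_{v\infty})\neq 0=P$. The correct criterion, which is the one used in \cite{Pa_ara_rangas_2014} and in Section \ref{generators} of the paper, is phrased in the quotient graph $F=E/(H,B_H)$: either $F$ has a sink $u_0$ with $F^0=M(u_0)$, in which case $u_0$ is either a sink of $E$ (take $\mathbf{N}_{u_0}$) or an infinite emitter of $E$ all of whose edges enter $H$ (take $\mathbf{S}_{u_0\infty}$; then $u_0\notin B_H$ and the annihilator does match), or else $F$ contains an infinite path $\mu$, not ending in a cycle without exits, with $F^0=M(\mu)$ (take $V_{[\mu]}$). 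As written, your rule can pick a module whose annihilator is strictly different from $P$, and no appeal to the annihilator computations in the literature can repair a wrong choice of module. The remaining parts of your outline (the sink case, the type III case with the observation that $c$ is exclusive in $E$, and the endomorphism computation) are sound.
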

\begin{proof}
	The existence of $V$ follows from \cite[Propositions 2.6 and 2.7]{Pa_rangas_2015} and \cite[Theorem~ 3.9]{Pa_ara_rangas_2014}. For the second assertion, we note that, according to Lemmas \ref{lemma_2.1}, \ref{lemma_2.2} and \ref{lemma_2.3}, we conclude that ${\rm End}_{L_K(E)}(V)$ is isomorphic to $K$ or $K[x,x^{-1}]/(f(x))$, where $f(x)$ is an irreducible polynomial in $K[x, x^{-1}]$.
\end{proof}
\section{Non-cyclic free subgroups in Leavitt path algebras}\label{free}

Let $L_K(E)$ be a Leavitt path algebra over a field $K$ of characteristic $0$. The main aim of this section is to prove that the multiplicative group $L_K(E)^\times$  contains a non-cyclic free subgroup provided that $L_K(E)$ is  noncommutative. 

\begin{lemma}\label{Lemma on primitive ideal}
	Let $L_K(E)$ be a noncommutative Leavitt path algebra. Then, there exists a primitive ideal $P$ of $L_K(E)$ such that $L_K(E)/P$ is a noncommutative primitive ring.
\end{lemma}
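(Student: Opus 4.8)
The plan is to exploit the semiprimitivity of $L_K(E)$, recorded in the excerpt via \cite[Proposition 2.3.2]{Bo_abrams_2017}, together with the standard description of the Jacobson radical as the intersection of all primitive ideals. Concretely, since $J(L_K(E)) = \bigcap_{P} P = 0$, where $P$ ranges over all primitive ideals of $L_K(E)$, every nonzero element of $L_K(E)$ must survive in at least one primitive quotient. This single observation already contains the whole argument.

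First I would fix the witness to noncommutativity. By hypothesis $L_K(E)$ is noncommutative, so there exist $a, b \in L_K(E)$ with $ab - ba \ne 0$. Because the intersection of all primitive ideals is zero, the element $ab - ba$ cannot lie in every primitive ideal; hence there exists a primitive ideal $P$ with $ab - ba \notin P$. Next I would read off the conclusion in the quotient: by the very definition of a primitive ideal, $L_K(E)/P$ is a primitive ring, and writing $\bar a, \bar b$ for the images of $a, b$, the condition $ab - ba \notin P$ says precisely that $\bar a \bar b \ne \bar b \bar a$ in $L_K(E)/P$. Thus this primitive ring is noncommutative, as required.

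There is essentially no serious obstacle here; the entire proof rests on the two inputs that $L_K(E)$ is semiprimitive and that its Jacobson radical coincides with the intersection of the primitive ideals. The only point worth stating with care is that one needs no structural information about the individual quotients: the single inequality $ab - ba \notin P$ simultaneously produces the desired primitive ideal and guarantees that its quotient is noncommutative. Equivalently, one could phrase the argument contrapositively by noting that $\bigcap_{P} P = 0$ yields an embedding $L_K(E) \hookrightarrow \prod_{P} L_K(E)/P$; if every primitive quotient were commutative, then this product, and hence its subring $L_K(E)$, would be commutative, contradicting the hypothesis.
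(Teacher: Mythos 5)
Your proposal is correct and matches the paper's argument: the paper likewise invokes semiprimitivity of $L_K(E)$ from \cite[Proposition 2.3.2]{Bo_abrams_2017} and the embedding $L_K(E)\hookrightarrow\prod_{i}L_K(E)/P_i$ to deduce that some primitive quotient is noncommutative, which is precisely the contrapositive phrasing you give in your last paragraph. Your commutator formulation ($ab-ba\notin P$ for some primitive $P$) is just a more explicit rendering of the same idea, so there is nothing to add.
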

\begin{proof}
	Recall that for any graph $E$ and any field $K$, the Leavitt path algebra $L_K(E)$ is a semiprimitive ring (see \cite[Proposition 2.3.2]{Bo_abrams_2017}).	
	Let $\{P_i\mid i\in I\}$ be the set of all primitive ideals in $L_K(E)$.	Then $\bigcap_{i\in I}P_i=0$ and there exists an injective ring homomorphism
	$$\varepsilon: L_K(E)\longrightarrow\prod_{i\in I}(L_K(E)/P_i).$$
	
	Since $L_K(E)$ is noncommutative, we conclude that $\prod_{i\in I}(L_K(E)/P_i)$ is noncommutative too. It
	follows that there exists $i_0\in I$ such that $L_K(E)/P_{i_0}$ is noncommutative. By setting $P=P_{i_0}$, the proof of the lemma is now complete.
\end{proof}

To proceed further, we need one result by Sanov on the existence of non-cyclic free subgroup in the general linear group $\mathrm{GL}_2(\mathbb{Z})$ of degree $2$ over the ring $\mathbb{Z}$ of integers. In fact, in 1947, Sanov \cite{Pa_Sanov_1947} proved that the subgroup $H=\left\langle \begin{pmatrix} 1 & 0  \\ 2 & 1\\ \end{pmatrix},\\
\begin{pmatrix} 1 & 2 \\ 0 & 1 \\\end{pmatrix}
\right\rangle $ is a free subgroup of the group ${\rm GL}_2(\mathbb{Z})$. Based on Sanov's result, we can state the following lemma, which is convenient for our use in the present paper.
\begin{lemma}[{\cite[Example 25, p.26]{Bo_Harpe_2000}}]\label{lemma_3.5} Let $K$ be a field of characteristic $0$. Then, the group $\left\langle \begin{pmatrix} 1 & 0  \\ 2 & 1\\ \end{pmatrix},\\
	\begin{pmatrix} 1 & 2 \\ 0 & 1 \\\end{pmatrix}
	\right\rangle $ is a free subgroup of the linear group ${\rm GL}_2(K)$.
\end{lemma}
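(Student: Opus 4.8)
The plan is to reduce the statement to Sanov's theorem, recorded just above, by means of the canonical inclusion of $\mathbb{Z}$ into $K$. Write $A=\begin{pmatrix} 1 & 0 \\ 2 & 1 \end{pmatrix}$ and $B=\begin{pmatrix} 1 & 2 \\ 0 & 1 \end{pmatrix}$, viewed as elements of $\mathrm{GL}_2(\mathbb{Z})$. By Sanov's theorem the subgroup $H=\langle A,B\rangle$ of $\mathrm{GL}_2(\mathbb{Z})$ is free of rank $2$. Since every entry of $A$ and $B$ is an integer, these same matrices also make sense over $K$, and proving the lemma amounts to showing that no nontrivial reduced word in $A,B$ collapses to the identity once we pass from $\mathbb{Z}$ to $K$.

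First I would exploit the hypothesis $\mathrm{char}\,K=0$: the prime subfield of $K$ is $\mathbb{Q}$, so the unital ring homomorphism $\iota:\mathbb{Z}\to K$ is injective and $\mathbb{Z}\subseteq\mathbb{Q}\subseteq K$. Applying $\iota$ entrywise yields a group homomorphism $\Phi:\mathrm{GL}_2(\mathbb{Z})\to\mathrm{GL}_2(K)$, and $\Phi(A),\Phi(B)$ are exactly the two matrices appearing in the statement.

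Next I would check that $\Phi$ is injective. If $M\in\mathrm{GL}_2(\mathbb{Z})$ satisfies $\Phi(M)=I_2$, then comparing entries shows that $\iota$ sends each entry of $M$ to the corresponding entry of $I_2$; since $\iota$ is injective this forces $M=I_2$, so $\ker\Phi=\{I_2\}$. Consequently the restriction $\Phi|_H:H\to\Phi(H)$ is a group isomorphism onto $\Phi(H)=\langle\Phi(A),\Phi(B)\rangle$, which is precisely the subgroup of $\mathrm{GL}_2(K)$ named in the lemma. As the isomorphic image of a free group is free, this subgroup is free, completing the argument.

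The main obstacle, or rather the genuine content, is not in the reduction but in Sanov's theorem itself, which we take as an input: it asserts that no nontrivial reduced word in $A$ and $B$ equals the identity in $\mathrm{GL}_2(\mathbb{Z})$, and is typically established by a ping-pong argument for the action on columns of $\mathbb{Z}^2$. Granting that theorem, the only thing requiring verification here is the injectivity of $\Phi$, which is immediate from $\mathrm{char}\,K=0$; there is no real difficulty beyond the observation that freeness is preserved under injective homomorphisms.
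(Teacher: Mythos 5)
Your proposal is correct and follows exactly the route the paper intends: the paper states this lemma without proof, citing Sanov's theorem for $\mathrm{GL}_2(\mathbb{Z})$ and de la Harpe's book, and the implicit argument is precisely your reduction---since $\mathrm{char}\,K=0$, the matrices generate a subgroup of $\mathrm{GL}_2(K)$ isomorphic (via the entrywise embedding $\mathbb{Z}\hookrightarrow K$) to Sanov's free group, and freeness is preserved under injective homomorphisms. Your write-up simply makes explicit what the paper leaves as a citation, so there is nothing to correct.
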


This result, although seemingly unrelated to Leavitt path algebras, provides a crucial tool for our investigation. By leveraging a suitable analogy between ${\rm GL}_2(K)$ and certain properties of $L_K(E)$, we will be able to utilize this lemma to demonstrate that $L_K(E)^{\times}$ always contains a non-cyclic free subgroup. Furthermore, the lemma will also contribute to describing these free subgroups in detail.

For the sake of completeness, we include the proof of the following elementary fact, as we are unaware of a reference. This result will be used many times throughout the paper without further reference.

\begin{lemma}\label{lemma_3.1}
	Let $R$ and $S$ be unital rings, and let $\varphi: R\to S$ be a ring surjective  homomorphism. Assume that $\left\langle x, y\right\rangle $ is a non-cyclic free subgroup of $S^\times$ generated by $x$ and $y$. If $a\in \varphi^{-1}(x)$ and $b\in \varphi^{-1}(y)$ such that $a, b \in R^\times$, then $\left\langle a, b\right\rangle $ is a non-cyclic free subgroup of $R^\times$ and $\left\langle a, b\right\rangle \cong \left\langle x, y\right\rangle$.
\end{lemma}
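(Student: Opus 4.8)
The plan is to leverage the fact that a ring homomorphism restricts to a group homomorphism on units, and then exploit the universal property of free groups. First I would observe that since $\varphi: R \to S$ is a surjective ring homomorphism of unital rings, it sends $1_R$ to $1_S$ and therefore maps $R^\times$ into $S^\times$; restricting $\varphi$ to $R^\times$ yields a group homomorphism $\varphi|_{R^\times}: R^\times \to S^\times$. By hypothesis $a, b \in R^\times$ with $\varphi(a) = x$ and $\varphi(b) = y$, so the restriction carries the subgroup $\langle a, b\rangle \le R^\times$ onto $\langle x, y\rangle \le S^\times$.

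Next I would set up the free-group comparison explicitly. Let $F$ be the free group on two symbols $\{s, t\}$. Since $\langle x, y\rangle$ is free on the generating pair $(x, y)$, the assignment $s \mapsto x$, $t \mapsto y$ extends to a group isomorphism $F \xrightarrow{\sim} \langle x, y\rangle$. On the other hand, the assignment $s \mapsto a$, $t \mapsto b$ extends to a group homomorphism $\psi: F \to \langle a, b\rangle \le R^\times$, which is automatically surjective since $a, b$ are in the image. The key computation is that the composite $\varphi|_{R^\times} \circ \psi: F \to \langle x, y\rangle$ sends $s \mapsto \varphi(a) = x$ and $t \mapsto \varphi(b) = y$, hence coincides with the chosen isomorphism $F \xrightarrow{\sim} \langle x, y\rangle$ (two group homomorphisms out of a free group agreeing on a free basis are equal).

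I would then conclude injectivity of $\psi$ by a standard diagram-chase: if $w \in \ker\psi$, then $\varphi(\psi(w)) = \psi(w)$'s image is trivial, so $w$ lies in the kernel of the composite $\varphi|_{R^\times}\circ\psi$; but that composite is an isomorphism, so $w = 1$. Thus $\psi$ is an injective surjection, i.e. an isomorphism $F \cong \langle a, b\rangle$. Composing with $F \cong \langle x, y\rangle$ gives $\langle a, b\rangle \cong \langle x, y\rangle$, and since the latter is non-cyclic free of rank $2$, so is the former; in particular $a$ and $b$ generate a non-cyclic free subgroup of $R^\times$.

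I do not expect any genuine obstacle here, since the statement is essentially the observation that a free group surjecting onto a free group of the same rank via a map that lifts a basis to a basis must be an isomorphism. The only point requiring a little care is to make sure the homomorphism $\psi$ is well-defined as a map of \emph{groups} into $R^\times$ (not merely a map of multiplicative semigroups), which is guaranteed precisely by the assumption $a, b \in R^\times$ so that all their inverses also lie in $R^\times$; without invertibility of $a$ and $b$ in $R$ the argument would break down, so I would flag that hypothesis as the one doing the real work.
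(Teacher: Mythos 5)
Your proof is correct and takes essentially the same route as the paper's: both restrict $\varphi$ to a group homomorphism onto $\langle x, y\rangle$ and invoke the universal property of the free group on the basis $\{x,y\}$ to lift the generators back to $a$ and $b$, concluding that the restriction is an isomorphism. The only cosmetic difference is that you factor through an abstract free group $F$ and deduce injectivity from the composite being an isomorphism, whereas the paper directly constructs the homomorphism $f:\langle x,y\rangle\to\langle a,b\rangle$ and observes that it is a two-sided inverse of $\varphi|_{\langle a,b\rangle}$.
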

\begin{proof}
	It is clear that the induced homomorphism $\bar{\varphi}:= \varphi |_{\langle a,b \rangle}$ of $\varphi$ on $\langle a,b \rangle $ is a surjective group homomorphism such that $\bar{\varphi}(a)=x$ and $\bar{\varphi}(b)=y$. By the universal property of the free group $\langle x,y \rangle $, there exists a group homomorphism $f: \langle x,y \rangle\to \langle a,b \rangle $ such that $f(x)=a$ and $f(y)=b$. This guarantees that $\bar{\varphi}$  is an isomorphism with $(\bar{\varphi})^{-1}=f$, and so $ \langle x,y \rangle\cong \langle a,b \rangle $.
\end{proof}

We are now in the position to prove the main result of this section.

\begin{theorem}\label{free subgroup}
	Let $L_K(E)$ be a noncommutative Leavitt path algebra over a field $K$ of characteristic $0$.  Then, the multiplicative group $L_K(E)^\times$ contains a non-cyclic free subgroup. 
\end{theorem}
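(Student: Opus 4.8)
The plan is to locate a copy of the matrix algebra $M_2(K)$ inside $L_K(E)$, place Sanov's free pair (Lemma~\ref{lemma_3.5}) inside it, and then transport that free pair into $L_K(E)^{\times}$ by a corner embedding; the only places where the hypotheses ``noncommutative'' and ``${\rm char}\,K=0$'' enter are, respectively, the existence of this $M_2(K)$ and the applicability of Sanov's lemma. Concretely, I would first observe that, since $L_K(E)$ is noncommutative, the graph $E$ must contain either an edge $e$ with $s(e)\ne r(e)$, or (if every edge is a loop) a vertex emitting two distinct loops $e_1,e_2$; any other graph yields a commutative algebra of the form $\prod K\times\prod K[x,x^{-1}]$. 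Establishing this dichotomy cleanly is, I expect, the first real task.

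In the first case set $E_{11}=ee^{*}$, $E_{22}=r(e)$, $E_{12}=e$, $E_{21}=e^{*}$; in the second set $E_{11}=e_1e_1^{*}$, $E_{22}=e_2e_2^{*}$, $E_{12}=e_1e_2^{*}$, $E_{21}=e_2e_1^{*}$. A direct computation with the defining relations (V), (E1), (E2), (CK1) shows that in both cases these four nonzero elements are $K$-linearly independent (they are distinct elements of the standard path basis) and obey the matrix-unit identities $E_{ij}E_{k\ell}=\delta_{jk}E_{i\ell}$; hence they span a subalgebra $S\cong M_2(K)$ whose identity is the idempotent $f:=E_{11}+E_{22}$. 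The key feature, forced by (CK1) and by $vv'=\delta_{v,v'}v$, is that $E_{12}$ and $E_{21}$ are \emph{square-zero in $L_K(E)$ itself}; consequently $1+2E_{21}$ and $1+2E_{12}$ are already units of $L_K(E)$, with inverses $1-2E_{21}$ and $1-2E_{12}$.

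To finish, I would use the group monomorphism $\theta\colon S^{\times}\to L_K(E)^{\times}$, $u\mapsto 1-f+u$, which is multiplicative and injective because the cross terms $(1-f)u$ and $u(1-f)$ vanish ($f$ being the identity of $S$). Under $S\cong M_2(K)$ the Sanov matrices correspond to $f+2E_{21}$ and $f+2E_{12}$, which generate a non-cyclic free subgroup of $S^{\times}\cong{\rm GL}_2(K)$ by Lemma~\ref{lemma_3.5}; applying $\theta$ carries them to $1+2E_{21}$ and $1+2E_{12}$, whose images therefore generate a non-cyclic free subgroup of $L_K(E)^{\times}$, and this simultaneously exhibits the generators explicitly in terms of the edges of $E$.

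Alternatively --- and this is presumably why Section~\ref{simple modules} is developed --- one may argue through a primitive quotient: Lemma~\ref{Lemma on primitive ideal} produces a noncommutative primitive $R=L_K(E)/P$, Proposition~\ref{proposition_2.4} supplies a faithful simple module $V$ with ${\rm End}_R(V)=D$ a \emph{field} containing $K$, and the Jacobson density theorem makes $R$ dense in ${\rm End}_D(V)$ with $\dim_D V\ge 2$; the commutativity of $D$ is exactly what lets Lemma~\ref{lemma_3.5} run over $D$. One then realizes the free pair in $R^{\times}$ and lifts it to $L_K(E)^{\times}$ via Lemma~\ref{lemma_3.1}, the lift being harmless precisely because the matrix units come from honest edges and are already square-zero upstairs. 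I expect the main obstacle to lie in the infinite-dimensional case of this second route: producing a genuine rank-$2$ idempotent corner $eRe\cong M_2(D)$ from density alone is \emph{not} automatic (a primitive domain has no such idempotent), which is exactly the difficulty that the explicit edge construction of the preceding paragraphs circumvents.
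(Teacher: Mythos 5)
Your first argument is correct, and it takes a genuinely different --- and more elementary --- route than the paper. The paper proves Theorem~\ref{free subgroup} by combining Lemma~\ref{Lemma on primitive ideal} (a primitive ideal $P$ with noncommutative quotient), Proposition~\ref{proposition_2.4} (a simple module whose endomorphism ring is a field $k\supseteq K$), and the density theorem, then applying Sanov's lemma over $k$ and lifting via Lemma~\ref{lemma_3.1}; crucially, the existence of \emph{unit} preimages $a,b$ is not established in Section~\ref{free} at all, but deferred to Theorems~\ref{type I}, \ref{type II}, \ref{th:4.10} and \ref{describing}, so the paper's proof is only completed by the Section~\ref{generators} case analysis of the type of $P$ (quotient graphs, Chen and Ara--Rangaswamy modules, etc.). You bypass all of this machinery: your dichotomy is sound (if every edge is a loop and no vertex emits two loops, then --- using the standing hypothesis that $E^0$ is finite --- $L_K(E)$ is a finite product of copies of $K$ and $K[x,x^{-1}]$, hence commutative); your four elements do satisfy $E_{ij}E_{k\ell}=\delta_{jk}E_{i\ell}$ by (V), (E1), (E2), (CK1) in both cases; and the corner map $u\mapsto 1-f+u$ is an injective group homomorphism $S^{\times}\to L_K(E)^{\times}$ precisely because $S=fSf$. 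This yields explicit generators ($1+2e$, $1+2e^{*}$ for a non-loop edge $e$, resp.\ $1+2e_1e_2^{*}$, $1+2e_2e_1^{*}$ for two loops at a common vertex) covering every noncommutative $L_K(E)$ uniformly, with no module theory whatsoever. What your route does not buy is the paper's second goal: the finer correspondence between the three types of primitive ideals and generators adapted to them. Your diagnosis of the obstacle in the density route (density alone does not produce a rank-$2$ idempotent corner in the non-artinian case) is exactly the gap the paper fills in Section~\ref{generators}.

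One small repair: your justification of the $K$-linear independence of the four matrix units (``distinct elements of the standard path basis'') is imprecise, since relation (CK2) can identify a monomial such as $ee^{*}$ with a vertex (e.g.\ when $s(e)$ is regular and emits only $e$), so the monomials $\gamma\lambda^{*}$ need not belong to a basis. But independence is not actually needed: the matrix-unit relations give a (non-unital) algebra homomorphism $\mathrm{M}_2(K)\to L_K(E)$, $e_{ij}\mapsto E_{ij}$, which is injective because $\mathrm{M}_2(K)$ is simple and $E_{11}\ne 0$; hence $S\cong \mathrm{M}_2(K)$ automatically, and the rest of your argument goes through unchanged.
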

\begin{proof}
		In view of Lemma \ref{Lemma on primitive ideal}, there exists a primitive ideal $P$ of $L_K(E)$ such that $L_K(E)/P$ is noncommutative. Also, Proposition \ref{proposition_2.4} implies that there is a left simple $L_K(E)$-module $V$ such that  $P={\rm Ann}_{L_K(E)}(V)$ and that $k:={\rm End}_{L_{K}(E)}(V)$ is a field containing $K$. Let $\varphi: L_K(E)\to L_K(E)/P$ be the natural epimorphism. Then, via $\varphi$, $V$ became $L_K(E)/P$-module.  A routine checking shows that $V$  is a left faithful simple $L_K(E)/P$-module, and  ${\rm End}_{L_K(E)/P}(V)={\rm End}_{L_{K}(E)}(V)=k$. By Density Theorem for Primitive ring (see e.g., \cite[Theorem (11.19)]{Bo_lam_2001}), there are two possible cases:
	
	\bigskip
	
	\textit{\textbf{Case 1}: $L_K(E)/P$ is artinian.} If we set $n=\dim_kV$, then $n<\infty$ and $L_K(E)/P\cong {\rm M}_n(k)$. Since $L_K(E)/P$ is noncommutative, we must have $n>1$. Since $K\subseteq k$, and $\mathrm{char}(K)=0$, by Lemma \ref{lemma_3.5}, ${\rm GL}_2(k)$ contains a non-cyclic free subgroup in two generators, say $H=\langle x, y\rangle$. Embedding ${\rm GL}_2(k)$ into ${\rm GL}_n(k)$ by obvious ways, we see that $H$ can be considered as a free subgroup of ${\rm GL}_n(k)$. It can be shown that we can choose specific elements $a\in \varphi^{-1}(x)$ and $b\in \varphi^{-1}(y)$ such that both $a$ and $b$ belong to the multiplicative group $L_K(E)^\times$. The construction of these specific elements and their connection to the base graph $E$ will be detailed in the next section (see Theorems \ref{type I}, \ref{type II}, and \ref{describing}).  Therefore, we may apply Lemma \ref{lemma_3.1} to conclude that $ \left\langle a,b\right\rangle $ is a non-cyclic free subgroup of   $L_K(E)^\times$. 
	
	\bigskip
	
	\textit{\textbf{Case 2}: $L_K(E)/P$ is not artinian.} Again, according to \cite[Theorem 11.19]{Bo_lam_2001}, for each $n>1$, there exists a subring $\Lambda_n$ of $L_K(E)/P$ and a surjective ring homomorphism $f: \Lambda_n\to{\rm M}_n(k)$. Let $x$, $y$ be the generators of a free subgroup of ${\rm GL}_n(k)$ given in Lemma \ref{lemma_3.5}. Since $\varphi$ and $f$ are surjective, there exist $a, b\in L_K(E)$ such that $\varphi(f(a))=x$ and $\varphi(f(b))=y$. We may choose $a$ and $b$ such that $a, b\in L_K(E)^\times$ (again, this fact will be shown in the next section by Theorems \ref{type I}, \ref{type II}, and \ref{describing}). Finally, it follows from Lemma  \ref{lemma_3.1} that $ \left\langle a,b\right\rangle $ is a non-cyclic free subgroup of   $L_K(E)^\times$. 
\end{proof}

\section{The generators of  non-cyclic free subgroups}\label{generators}

Theorem \ref{free subgroup} in the precedent section shows the existence of non-cyclic free subgroups in the multiplicative group of a noncommutative Leavitt path algebra. The existence of such free subgroups was shown based on the existence of such a primitive ideal $P$ of $L_K(E)$ that $L_K(E)/P$ is noncommutative (such an ideal $P$ exists in view of Lemma \ref{Lemma on primitive ideal}). However, describing the generators of these free subgroups in detail seems to be a challenging task.  This section is devoted to examining this problem and providing a comprehensive description of these generators. The next theorem, which follows immediately from \cite[Theorem 3.12(iii) and Theorem 4.3]{Pa_rangas_2013},  classifies primitive ideals of $L_K(E)$ by three types.
\begin{theorem}\label{th:4.1}
	For a given ideal $P$ of $L_K(E)$, set $H=P\cap E^0$. Then, $P$ is primitive if and only if $P$ is one of the following types:
	\begin{enumerate}[font=\normalfont]
		\item[I:] $P$ is a  graded ideal of the form $I(H,B_H\backslash\{w\})$ for some $w\in B_H$ and $M(w)=E^0\backslash H$.
		\item[II:] $P$ is a graded ideal of the form $I(H,B_H)$ such that $E^0\backslash H$ satisfies the {\rm MT-3} condition and the countable separation property, and $E/(H, B_H)$ satisfies the condition ${\rm (L)}$.
		\item[III:] $P=I(H, B_H, f(c))$, where $c$ is an exclusive cycle based at a vertex $u$, $E^0\backslash H=M(u)$, and $f(x)$ is an irreducible polynomial in $K[x, x^{-1}]$.
	\end{enumerate}
\end{theorem}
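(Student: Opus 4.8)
The plan is to reduce the statement to Rangaswamy's classification of primitive ideals, organizing everything around the graded/non-graded dichotomy. First I would record two preliminary facts: every primitive ideal $P$ is prime, and $H=P\cap E^0$ is always a hereditary and saturated subset of $E^0$. This $H$ is the canonical datum attached to $P$, and it is what enters each of the three clauses. By the structure theory of graded ideals recalled in Remark~\ref{rem:1.2}, a graded ideal is exactly one of the form $I(H,S)$ for an admissible pair $(H,S)$; hence the first branch point is whether $P$ is graded.

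For the graded branch I would invoke \cite[Theorem 3.12(iii)]{Pa_rangas_2013}. Its content is that a graded primitive ideal $I(H,S)$ is governed by the size of $S\subseteq B_H$: either $S=B_H$, or $S=B_H\backslash\{w\}$ for a single breaking vertex $w$. The latter, together with $M(w)=E^0\backslash H$, is precisely Type~I; the former, together with the requirements that $E^0\backslash H$ satisfy MT-3 and the countable separation property and that $E/(H,B_H)$ satisfy Condition~$\mathrm{(L)}$, is precisely Type~II. The work in this branch is to match the graph-theoretic hypotheses of the cited theorem verbatim against the conditions listed in Types~I and~II, noting that the two cases correspond module-theoretically to the families $\mathbf{S}_{w\infty}$ and $V_{[\mu]}$ (or $\mathbf{N}_w$) studied in Section~\ref{simple modules}.

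For the non-graded branch I would quote \cite[Theorem 4.3]{Pa_rangas_2013}, which asserts that every non-graded primitive ideal is of the form $I(H,B_H,f(c))$ with $c$ an exclusive cycle based at a vertex $u$, $E^0\backslash H=M(u)$, and $f$ irreducible in $K[x,x^{-1}]$; this is exactly Type~III, and it matches the twisted module $V_{[\mu]}^f$ of part (C). Together the two branches exhaust all primitive ideals, giving the forward implication, while the converse follows because both cited results are stated as equivalences.

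The hard part will not be any single deep argument but the precise alignment of hypotheses: one must verify that the conditions in \cite[Theorems 3.12(iii), 4.3]{Pa_rangas_2013}, phrased in Rangaswamy's own notation, specialize to the field setting here and line up with the three clauses --- in particular that the single omitted breaking vertex $w$ with $M(w)=E^0\backslash H$ is what distinguishes Type~I from Type~II, and that the exclusivity of $c$ is exactly the feature forcing non-gradedness in Type~III.
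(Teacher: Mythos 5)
Your proposal matches the paper's approach exactly: the paper gives no independent argument, stating that the theorem ``follows immediately from \cite[Theorem 3.12(iii) and Theorem 4.3]{Pa_rangas_2013},'' which are precisely the two results you invoke for the graded and non-graded branches respectively. Your additional scaffolding (primitivity implies primeness, $H=P\cap E^0$ hereditary and saturated, the graded/non-graded dichotomy via Remark~\ref{rem:1.2}) is correct and simply makes explicit the alignment of hypotheses that the paper leaves to the cited source.
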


\begin{remark}\label{rem:4.2}
	Primitive ideals of ${\rm type \; I}$ are always non-zero because $B_H\ne\varnothing$ and so $H\ne\varnothing$, and ones of ${\rm type \; III}$ are also non-zero as $f(c)\ne0$. On the other hand, the primitive ideals of ${\rm type \; II}$ are possibly zero because it is possible that $H=B_H=\varnothing$, and so $L_K(E)$ is a primitive ring in this case.
\end{remark}
Taking any primitive ideal $P$ of $L_K(E)$ such that $L_K(E)/P$ is noncommutative (such an ideal exists according to Lemma \ref{Lemma on primitive ideal}), we shall determine the type of $P$ according to Theorem \ref{th:4.1}, and then we describe non-cyclic free subgroups of $L_K(E)^\times$ with their generators by the type of $P$. 

\subsection{Free subgroups determined by the primitive ideals of type I}

In this subsection we describe free subgroups of $L_K(E)$ by generators in case when the ideal $P$ is of type I.

\begin{lemma}\label{lem:4.3}
	Assume that $E$ contains a sink $w$ and an edge $f$ such that $r(f)=w$. Then $\left\langle 1+2f^*, 1+2f\right\rangle $ is a non-cyclic free subgroup of $L_K(E)^{\times}$.
\end{lemma}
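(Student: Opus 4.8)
The plan is to produce a surjective unital ring homomorphism from a suitable subring of $L_K(E)$ onto $\mathrm{M}_2(K)$ that sends $1+2f^*$ and $1+2f$ to the two Sanov matrices of Lemma~\ref{lemma_3.5}, and then to conclude with Lemma~\ref{lemma_3.1}. The natural vehicle is the Chen module $\mathbf{N}_w$ attached to the sink $w$, whose $K$-basis is the set of finite paths ending at $w$. Among these are the trivial path $w$ and the length-one path $f$ (recall $r(f)=w$), and I would first check that the $2$-dimensional subspace $U=Kw\oplus Kf$ is invariant under $f$, $f^*$ and the identity. The sink hypothesis is exactly what forces the action into the right shape: since $w$ emits no edge, the only path starting at $w$ is $w$ itself, so $f\cdot\eta\neq0$ forces $\eta=w$ and $f^*\cdot\eta\neq0$ forces $\eta=f$. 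This yields $f\cdot w=f$, $f\cdot f=0$, $f^*\cdot w=0$, $f^*\cdot f=w$, while $1=\sum_{v}v$ acts as the identity on $U$.

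Reading these off in the ordered basis $(w,f)$ gives $\rho(f)=\left(\begin{smallmatrix}0&0\\1&0\end{smallmatrix}\right)$ and $\rho(f^*)=\left(\begin{smallmatrix}0&1\\0&0\end{smallmatrix}\right)$, hence
$$\rho(1+2f)=\begin{pmatrix}1&0\\2&1\end{pmatrix},\qquad \rho(1+2f^*)=\begin{pmatrix}1&2\\0&1\end{pmatrix},$$
which are precisely the generators in Lemma~\ref{lemma_3.5}. Let $R$ be the unital $K$-subalgebra of $L_K(E)$ generated by $f$ and $f^*$. Because $U$ is $R$-invariant, restriction of the $\mathbf{N}_w$-action defines a unital ring homomorphism $\rho\colon R\to\mathrm{End}_K(U)\cong\mathrm{M}_2(K)$, and it is surjective because $\rho(f)$, $\rho(f^*)$ together with the products $\rho(f^*f)=\rho(w)$ and $\rho(ff^*)$ already realise all four matrix units.

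It remains to see that $1+2f$ and $1+2f^*$ are genuine units of $R$, for which I would record the nilpotency relations $f^2=(f^*)^2=0$. Writing $u=s(f)$ and noting $u\neq w$ (otherwise $w$ would emit $f$, contradicting that $w$ is a sink), the defining relations give $f=fw=uf$, so $f^2=(fw)(uf)=f(wu)f=0$ since $wu=0$; symmetrically $(f^*)^2=0$. Consequently $(1\pm2f)(1\mp2f)=1$ and $(1\pm2f^*)(1\mp2f^*)=1$, so both elements lie in $R^\times$. By Lemma~\ref{lemma_3.5} the two displayed matrices generate a non-cyclic free subgroup of $\mathrm{GL}_2(K)$, and $1+2f^*$, $1+2f$ are units of $R$ lying in the respective $\rho$-preimages; Lemma~\ref{lemma_3.1} then yields that $\left\langle 1+2f^*,1+2f\right\rangle$ is a non-cyclic free subgroup of $R^\times\subseteq L_K(E)^\times$, isomorphic to the Sanov group.

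The only delicate point is the very first step: choosing the invariant subspace $U$ and confirming that restriction to it is a \emph{surjective, unital} ring homomorphism even though $U$ need not be a full $L_K(E)$-submodule (it fails to be stable under other edges into $w$ when they exist). Once $U$ is isolated and the action of $f,f^*$ on it is computed, the match with Sanov's matrices and the remaining verifications are routine, and the sink hypothesis is precisely what makes the $2\times2$ picture exact.
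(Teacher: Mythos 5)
Your proposal is correct, and its skeleton matches the paper's: both arguments work inside the two-dimensional subspace $Kw\oplus Kf$ of Chen's module $\mathbf{N}_w$, realise the Sanov matrices of Lemma~\ref{lemma_3.5} as images of $1+2f^*$ and $1+2f$, verify invertibility from $f^2=(f^*)^2=0$, and pull the free group back through Lemma~\ref{lemma_3.1}. Where you genuinely diverge is in how the surjection onto $\mathrm{M}_2(K)$ is produced. The paper takes $R$ to be the full stabiliser $\{x\in L_K(E): xV\subseteq V\}$ and $I=\{x\in L_K(E): xV=0\}$; it must then classify which monomials $\gamma\lambda^*$ stabilise $V$, show that $\{s(f)+I,\,f+I,\,f^*+I,\,w+I\}$ is a $K$-basis of $R/I$, and only afterwards read off the homomorphism to $\mathrm{M}_2(K)$. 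You instead take $R$ to be the unital subalgebra generated by $f$ and $f^*$, whose invariance on $U$ follows from a handful of one-line computations (for $f$, $f^*$, $w=f^*f$ and $ff^*$), and surjectivity is immediate because the images of $w$, $f^*$, $f$, $ff^*$ are exactly the four matrix units. This is leaner: it bypasses the paper's case analysis and the linear-independence verification entirely. What the paper's longer computation buys is a precise identification of the stabiliser modulo the annihilator, namely $R/I\cong\mathrm{M}_2(K)$, but that extra information is never used later, so for the lemma as stated your smaller subring suffices. You also correctly isolated and resolved the one delicate point: $U$ is not an $L_K(E)$-submodule (other edges into $w$ ruin invariance), so the $2\times 2$ representation only exists after passing to a subring stabilising $U$ --- the same device the paper uses, just applied to a smaller and more convenient subring.
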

\begin{proof}
	Let $V=Kw\oplus Kf$ be the subspace of the $K$-space $\mathbf{N}_w$ defined by the sink $w$ as in Section 2(B).  Then, this is a non-zero $K$-space with a $K$-basis $\{w, f\}$ and $\dim_KV=2$. Put
	$$
	R=\{x\in L_K(E): xV\subseteq V\} \text{ and } I=\{x\in L_K(E): x V=0\}, 
	$$ 
	then $R$ is a subring of $L_K(E)$ and $I$ is an ideal of $R$, so $R/I$ can be viewed as a $K$-space. Now, we are going to seek a $K$-basis for $R/I$. Take $\lambda, \gamma\in\mathrm{Path}(E)$ such that $r(\gamma)= r(\lambda)$, and $v := a_1w+a_2f\in V$,  where $a_1, a_2\in K$. We first make the following computations
	$$
	a_1\gamma\lambda^* w =   \begin{cases} 
		a_1\gamma      & \text{ if }  \lambda=w,\\
		0      & \text{ otherwise}.
	\end{cases}
	$$
	
	$$
	a_2\gamma\lambda^* f =    \begin{cases} 
		a_2\gamma f      & \text{ if }  \lambda=s(f),\\
		a_2\gamma      & \text{ if }  \lambda=f,\\
		0      & \text{ otherwise}.
	\end{cases}
	$$
	It follows that 
	$$
	\gamma\lambda^* v = a_1\gamma\lambda^* w+a_2\gamma\lambda^*f 
	=  \begin{cases} 
		a_1\gamma      & \text{ if }  \lambda=w,\\
		a_2\gamma f      & \text{ if }  \lambda=s(f),\\
		a_2\gamma      & \text{ if }  \lambda=f,\\
		0      & \text{ otherwise}.
	\end{cases}\eqno(1)
	$$
	Now, take $x\in L_K(E)$ and write 
	$$
	x=k_1\gamma_1\lambda_1^*+k_2\gamma_2\lambda_2^*+\dots+k_n\gamma_n\lambda_n^*,
	$$
	where $k_i\in K$ and $ \gamma_i, \lambda_i \in{\rm Path}(E) $ with $r(\gamma_i)=r(\lambda_i)$ for all $1\leq i \leq n$. 
	In view of (1), $x \in R$ if and only if either $x v=0 $, or else $\lambda_i\in \{s(f), w, f\}$ and, as $xv\in V$, $\gamma_i \in \{s(f), w, f\}$. Equivalently, $x \in R$ if and only if either $x v=0 $ or $\gamma_i\lambda_i^*\in \{s(f), w, f, f^*, ff^*\}$.  Since, we have 
	$$
	(s(f)-ff^*)v=(s(f)-ff^*)(a_1w+a_2f)=-a_2(f-f)=0.
	$$
	Because $v$ was taken arbitrarily, we conclude that $(s(f)-ff^*)V=0$, from which it follows that $s(f)-ff^*\in I$. This means that 	
	$$
	R/I={\rm span}_K\left\lbrace s(f)+I,  f+I, f^*+I, w+I\right\rbrace.
	$$ 
Now, we will check that these elements are linearly independent over $K$. Let $ a,b,c,d\in K$ be such that 
	$$
	a(s(f)+I)+b(f+I)+c(f^*+I)+d(w+I)=0+I.
	$$
	Then $as(f)+bf+cf^*+dw\in I$, which means that $(as(f)+bf+cf^*+dw)V=0$. So,
	$$
	\begin{cases} 
		(as(f)+bf+cf^*+dw)f=0,\\
		(as(f)+bf+cf^*+dw)w=0.
	\end{cases}
	$$
	As $s(f)\ne w= r(f)$, we get that 
	$$
	\begin{cases} 
		af+cw=0,\\
		dw+bf=0.
	\end{cases}
	$$
	As $w, f$ are linearly independent over $K$, we have $a=b=c=d=0$. Thus, $s(f)+I, f+I, f^*+I,  w+I\in R/I$ are linearly independent over $K$. Therefore, the set $\{s(f)+I,  f^*+I, r(f)+I, f+I \}$ is a $K$-basis for $R/I$. Accordingly, for every element $x\in R$, the element $x+I$ can be written uniquely in the form
	$$a(s(f)+I) + b(f+I) + c(f^*+I) +d(w+I), \text{ where } a, b, c, d\in K.$$
Let us consider the mapping
$$\varphi: R\longrightarrow \mathrm{M}_2(K),$$
defined by $\varphi(x)=\begin{pmatrix} a & b  \\ c & d\\ \end{pmatrix}$ for any $x\in R$. Clearly, $\varphi$ is a surjective ring homomorphism with $\ker \varphi =I$.

Consider the following matrices in ${\rm M}_2(K)$: 
	\begin{align*}
		A             =    \begin{pmatrix} 1 & 0  \\ 2 & 1\\ \end{pmatrix},\;\;\;
		B             =    \begin{pmatrix} 1 & 2  \\ 0 & 1\\ \end{pmatrix}.
	\end{align*}
	It is easy to see that $\varphi(1+2f^*)=A$ and $\varphi(1+2f)=B$. According to Lemma \ref{lemma_3.5}, $\left\langle A,B\right\rangle $ is a non-cyclic free subgroup of ${\rm GL}_2(K)$. Since $(2f)^2=0$ and $(2f^*)^2=0$, it follows that $1+2f^*$ and $1+2f$ are invertible in $L_K(E)$ with 
	\begin{align*}
		&	(1+2f^*)^{-1}     =   1-2 f^*,\\
		&	(1+2f )^{-1}     =   1-2 f.
	\end{align*}
	Since $\varphi: R\to {\rm M}_2(K)$ is surjective, we may apply Lemma \ref{lemma_3.1} to conclude that $\left\langle 1+2f^*, 1+2f\right\rangle $ is a  non-cyclic free subgroup of $R^\times \subseteq L_K(E)^\times$.
\end{proof}

\begin{example}
	Let $E_T$ be the Toeplitz graph:
	\begin{center}
		\begin{tikzpicture}
			\node at (0,0) (0) {$\bullet$};
			\node at (0,-0.3) {$u$};
			\node at (1.5,0) (2) {$\bullet$};
			\node at (1.5,-0.3) {$v$};
			
			\draw [->] (0) to [out=225,in=135,looseness=6] node[left] {$e$} (0);
			\draw [->] (0) to node[above] {$f$} (2);
		\end{tikzpicture}
	\end{center}
	Let $\mathscr{T}_K=L_K(E_T)$  be the Toeplitz $K$-algebra $\mathscr{T}_K$. It follows from Lemma \ref{lem:4.3} that $\left\langle 1+2f^*, 1+2f\right\rangle $ is a  non-cyclic free subgroup of $\mathscr{T}_K^{\times}$.
\end{example}

\begin{lemma}\label{lem:4.4}
	Let $(H,S)$ be an admissible pair in a graph $E$,  and $\varphi: L_K(E)\to L_K(E/(H,S))$ the epimorphism given in Remark \ref{rem:1.2}. Assume that $f\in E^1$ with $w=r(f)\in B_H\backslash S$. Then we have 
	$$\varphi(s(f))=s(f'), \;\; \varphi(w^H)=w', \;\;  \varphi(fw^H)=f',  \;\; \varphi(w^Hf^*)=f'^*,$$
	where $w'$ and $f'$ are the corresponding vertex and edge in the quotient graph $E/(H,S)$ of $w$ and $f$ respectively.
\end{lemma}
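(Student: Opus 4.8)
The plan is to verify each of the four equalities by a direct computation from the description of $\varphi$ in Remark~\ref{rem:1.2} together with the definition $w^H=w-\sum_{\{s(e)=w,\,r(e)\notin H\}}ee^*$, the only substantive step being the second identity. I begin with the easy bookkeeping. Since $H$ is hereditary and $s(f)\ge r(f)=w\in E^0\backslash H$, we must have $s(f)\notin H$; and since $f$ is an edge with $r(f)=w\in B_H\backslash S$, both $f$ and $f'$ lie in $(E/(H,S))^1$ with $s(f')=s(f)$. Reading off the defining formula for $\varphi$ on the vertex $s(f)$ then gives $\varphi(s(f))=s(f)=s(f')$, the first equality. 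Everything else reduces to pinning down $\varphi(w^H)$, after which the last two equalities follow formally.

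For the second equality the key observation is that, although $w$ is an infinite emitter of $E$, it becomes a \emph{regular} vertex of the quotient graph $E/(H,S)$: by the definition of a breaking vertex it emits only finitely many, and at least one, edges whose range lies in $E^0\backslash H$, and these are precisely the edges of $E/(H,S)$ issuing from $w$, namely $\{e:s(e)=w,\ r(e)\notin H\}$ together with $\{e':s(e)=w,\ r(e)\in B_H\backslash S\}$. Applying (CK2) in $L_K(E/(H,S))$ to $w$ therefore yields
$$w=\sum_{\{e\,:\,s(e)=w,\ r(e)\notin H\}}ee^{*}+\sum_{\{e\,:\,s(e)=w,\ r(e)\in B_H\backslash S\}}e'(e')^{*}.$$
On the other hand, computing $\varphi(ee^*)=\varphi(e)\varphi(e^*)$ for each edge $e$ with $s(e)=w$ and $r(e)\notin H$, the cross terms vanish: using (E1), (E2) and (V) one has $e(e')^{*}=e\bigl(r(e)\,r(e)'\bigr)(e')^{*}=0$ and likewise $e'e^{*}=0$, because $r(e)$ and $r(e)'$ are distinct vertices of the quotient. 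Hence $\varphi(ee^{*})=ee^{*}+e'(e')^{*}$ when $r(e)\in B_H\backslash S$ and $\varphi(ee^{*})=ee^{*}$ otherwise. Summing over all such $e$ and comparing with the displayed (CK2) expansion gives $\sum_{\{s(e)=w,\,r(e)\notin H\}}\varphi(ee^{*})=w$. Since $\varphi(w)=w+w'$ because $w\in B_H\backslash S$, I conclude
$$\varphi(w^{H})=\varphi(w)-\sum_{\{s(e)=w,\,r(e)\notin H\}}\varphi(ee^{*})=(w+w')-w=w'.$$

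The last two equalities now follow from multiplicativity of $\varphi$ and the value just obtained. For $\varphi(fw^H)=\varphi(f)\varphi(w^H)=(f+f')w'$ I use that in $E/(H,S)$ the edge $f$ has range $w$ while $f'$ has range $w'$; thus by (E1) and (V), $fw'=f\,w\,w'=0$ and $f'w'=f'$, which gives $\varphi(fw^H)=f'$. Dually, $\varphi(w^Hf^*)=w'\varphi(f^*)=w'\bigl(f^*+(f')^*\bigr)$, and since $f^*=wf^*$ and $(f')^*=w'(f')^*$ by (E2), one gets $w'f^*=0$ and $w'(f')^*=(f')^*$, hence $\varphi(w^Hf^*)=(f')^*$.

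The only genuinely delicate point is the second equality: everything hinges on recognizing that the breaking vertex $w$ is regular in $E/(H,S)$, so that (CK2) is available there, and on the orthogonality $r(e)\,r(e)'=0$ in the quotient that annihilates the cross terms in $\varphi(ee^{*})$. Once $\varphi(w^H)=w'$ is established, the identities for $fw^H$ and $w^Hf^*$ are routine manipulations with the edge relations in $L_K(E/(H,S))$.
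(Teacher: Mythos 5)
Your proof is correct and takes essentially the same route as the paper's: both reduce everything to showing that $\varphi$ of the correction sum $\sum_{\{s(e)=w,\,r(e)\notin H\}}ee^*$ equals the vertex $w$ of the quotient, by killing the cross terms $e(e')^*$ and $e'e^*$ and invoking (CK2) at $w$ in $L_K(E/(H,S))$, after which the identities for $w^H$, $fw^H$ and $w^Hf^*$ follow by multiplicativity. Your uniform treatment (the paper splits into the cases where some or none of the $r(e_i)$ lie in $B_H\backslash S$) and your explicit justification that $w$ is a regular vertex of the quotient, so (CK2) applies, are minor streamlinings of the same argument.
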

\begin{proof}
	Definition \ref{def:1.1} confirms that $s(f')=s(f)$, so by Remark \ref{rem:1.2}, we have  $\varphi(s(f))=s(f')$. For the proof of the remaining relation, we recall that $$w^H=w-\sum_{\{s(e)=w,\;r(e)\not\in H\}}ee^*.$$
	For the sake of simplicity, we set $\Delta=\sum_{\{s(e)=w,\;r(e)\not\in H\}}ee^*$. First, we claim that $\varphi(\Delta)=w\in  (E/(H,S))^0$. Let $\{e_1, e_2,\dots,e_n\}$ be an enumeration of the elements of the set $\{e\in E^1| s(e)=w,r(e)\not\in H\}$. (Note that since $w$ is a breaking vertex of $H$, this set is non-empty and finite.) If $r(e_i)\not\in B_H\backslash S$ for all $1\leq i\leq n$, then, by Remark~\ref{rem:1.2}, we have $\varphi(e_i)=e_i$ and $\varphi(e_i^*)=e_i^*$ for all $i$. In this case, the set of all edges in $E/(H,S)$ having the common source $w$ is $\{e_1, e_2,\dots,e_n\}$. Therefore, the relation (CK2) in $L_K(E/(H,S))$ implies that $\varphi(\Delta)=\sum_{i=1}^{n}e_ie_i^*=w\in  (E/(H,S))^0$ Now, assume that some of the $r(e_i)$'s belong to $B_H\backslash S$ and the remaining of them do not. Without loss of the generality, we may assume that $r(e_1),\dots, r(e_m) \in B_H\backslash S$ and $r(e_{m+1}),\dots, r(e_n)\not\in B_H\backslash S$, where $1\leq m\le n$. It follows from Remark  \ref{rem:1.2} that $\varphi(e_i)=e_i+e_i'$, $\varphi(e_i^*)=e_i^*+e_i'^*$ for $1\leq i\leq m$ and $\varphi(e_i)=e_i$, $\varphi(e_i^*)=e_i^*$ for $m+1\leq i\leq n$. Consequently, we have 
	$$
	\varphi(\Delta)=\sum_{i=1}^{m}(e_i+e_i')(e_i^*+e_i'^*)+\sum_{i=m+1}^{n}ee_i^*.
	$$
	For each $1\leq i\leq m$, in $L_K(E/(H,S))$, we have the following relation $$(e_i+e_i')(e_i^*+e_i'^*)=e_ie_i^*+e_ie_i'^*+e_i'e_i^*+e_i'e_i'^*.$$
	Because $r(e')$ is a sink in $E/(H,S)$, we have $e_ie_i'^*=e_i'e_i^*=0$, from which it follows that $(e_i+e_i')(e_i^*+e_i'^*)=e_ie_i^*+e_i'e_i'^*$. This implies that 
	$$ \varphi(\Delta)=\sum_{i=1}^{m}e_ie_i^*+\sum_{i=1}^{m}e_i'e_i'^*+\sum_{i=m+1}^{n}ee_i^*=\sum_{i=1}^{n}e_ie_i^*+\sum_{i=1}^{m}e_i'e_i'^*.
	$$
	Observe that the set of all  edges in $E/(H,S)$ having the common source $w$ is $$\{e_1, e_2,\dots,e_n\} \cup \{e_1',\dots,e_m'\}.$$ Again, the relation (CK2) in $L_K(E/(H,S))$ implies that $ \varphi(\Delta)=w\in (E/(H,S))^0$, and the claim is shown. Using the fact that $\varphi(\Delta)=w$, we now easily make the following computations:
	\begin{align*}
		&	\varphi(w^H)\hspace*{0.3cm}			= 	\varphi(w)-\varphi(\Delta) = w+w'-w=w'.\\
		&	\varphi(fw^H)\hspace*{0.1cm}			= 	\varphi(f)\varphi(w)-\varphi(f)\varphi(\Delta)\\&\hspace*{1.4cm} = (f+f')(w+w')-(f+f')w=f+f'-f=f'.\\	
		&	\varphi(w^Hf^*)		  =   \varphi(w)\varphi(f^*)-\varphi(\Delta)\varphi(f^*) = (w+w')(f^*+f'^*)-w(f^*+f'^*)\\&\hspace*{1.4cm}=f^*+f'^*-f^*=f'^*.	
	\end{align*}
	Therefore, the proof is now complete.
\end{proof}

Now, we are ready to prove the main theorem of this subsection, describing the generators of a free subgroup of $L_K(E)^\times$ in case $P$ is of type I.
\begin{theorem}\label{type I}
	Let $P$ be a primitive ideal of $L_K(E)$ such that $L_K(E)/P$ is noncommutative. 
	Assume that $P$ is of type {\rm I}. Let $w$ be the breaking vertex determining $P$ as in Theorem \ref{th:4.1}. Then, there exists $f\in E^1$ with $r(f)=w$ such that $ \left\langle 1+2 w^Hf^* , 1+ 2fw^H \right\rangle $ is a non-cyclic free subgroup of $L_K(E)^{\times}$.
\end{theorem}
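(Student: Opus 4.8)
The plan is to transport the free subgroup produced by Lemma~\ref{lem:4.3} from a quotient graph back to $L_K(E)$, using the epimorphism of Remark~\ref{rem:1.2} together with the lifting Lemma~\ref{lemma_3.1}. Since $P$ is of type~{\rm I}, Theorem~\ref{th:4.1} gives $P=I(H,B_H\setminus\{w\})$ with $w\in B_H$ and $M(w)=E^0\setminus H$. Setting $S=B_H\setminus\{w\}$, we have $w\in B_H\setminus S$, so the quotient graph $E/(H,S)$ contains the new vertex $w'$. Directly from Definition~\ref{def:1.1}, $w'$ occurs only as a range (of the primed edges $e'$ with $r(e)=w$) and never as a source, hence $w'$ is a sink of $E/(H,S)$. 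Writing $\varphi\colon L_K(E)\to L_K(E/(H,S))$ for the epimorphism of Remark~\ref{rem:1.2}, whose kernel is $P$, the goal becomes to apply Lemma~\ref{lem:4.3} to this sink and then pull the generators back through $\varphi$.

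First I would produce the edge $f$. The defining condition of a breaking vertex yields at least one edge $g$ with $s(g)=w$ and $u:=r(g)\in E^0\setminus H$. Because $P$ is of type~{\rm I}, the condition $M(w)=E^0\setminus H$ forces $u\ge w$, i.e. there is a path from $u$ back to $w$; concatenating $g$ with this path produces a closed path at $w$ of positive length whose terminal edge is the desired $f\in E^1$ with $r(f)=w$ (if already $u=w$, take $f=g$). The corresponding primed edge $f'$ then satisfies $r(f')=w'$, so Lemma~\ref{lem:4.3}, applied to the algebra $L_K(E/(H,S))$ (unital, since $(E/(H,S))^0$ is finite), shows that $\langle 1+2f'^{*},\,1+2f'\rangle$ is a non-cyclic free subgroup of $L_K(E/(H,S))^{\times}$.

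It remains to exhibit invertible preimages. Lemma~\ref{lem:4.4} gives $\varphi(w^Hf^{*})=f'^{*}$ and $\varphi(fw^H)=f'$, and since $\varphi$ is unital we obtain $\varphi(1+2w^Hf^{*})=1+2f'^{*}$ and $\varphi(1+2fw^H)=1+2f'$. To see that these preimages are units, I would verify the two identities $w^Hf=0$ and $f^{*}w^H=0$ by a short computation with the relations (V), (E1), (E2), (CK1), distinguishing only whether $f$ is a loop at $w$; these immediately yield $(fw^H)^2=0$ and $(w^Hf^{*})^2=0$, so $1+2fw^H$ and $1+2w^Hf^{*}$ are invertible with inverses $1-2fw^H$ and $1-2w^Hf^{*}$. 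Applying Lemma~\ref{lemma_3.1} to $\varphi$ with $x=1+2f'^{*}$, $y=1+2f'$ and these preimages then shows that $\langle 1+2w^Hf^{*},\,1+2fw^H\rangle$ is a non-cyclic free subgroup of $L_K(E)^{\times}$.

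The main obstacle is the production of the edge $f$: the breaking-vertex hypothesis only controls edges \emph{out} of $w$, so it is precisely the type~{\rm I} condition $M(w)=E^0\setminus H$ that must be invoked to guarantee an edge \emph{into} $w$, equivalently that the sink $w'$ is not isolated, which is exactly what makes Lemma~\ref{lem:4.3} applicable. Once $f$ is in hand, the sink property of $w'$, the nilpotency computations, and the final lifting are all routine.
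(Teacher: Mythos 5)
Your proposal is correct and follows essentially the same route as the paper's proof: pass to the quotient graph $F=E/(H,B_H\backslash\{w\})$ via the epimorphism of Remark \ref{rem:1.2} (whose kernel is $P$), observe that $w'$ is a sink of $F$, apply Lemma \ref{lem:4.3} to the edge $f'$, use Lemma \ref{lem:4.4} to recognize $1+2w^Hf^*$ and $1+2fw^H$ as invertible preimages of the generators, and lift by Lemma \ref{lemma_3.1}. The only real deviation is how the edge $f$ with $r(f)=w$ is produced: the paper argues by contradiction (if $r^{-1}(w)=\varnothing$ the quotient algebra would be commutative, against the hypothesis on $L_K(E)/P$), whereas you construct $f$ directly from the breaking-vertex condition together with $M(w)=E^0\backslash H$ --- a variant that is equally valid, makes the nilpotency claims $(fw^H)^2=(w^Hf^*)^2=0$ explicit where the paper merely asserts them, and incidentally shows that for type I ideals the noncommutativity hypothesis is automatic rather than needed.
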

\begin{proof}
	Take a vertex $w\in B_H$ as in the part I of Theorem \ref{th:4.1}. That is, $w\in B_H$ is  a vertex such that $P=I(H, B_H\backslash\{w\})$ and $M(w)=E^0\backslash H$. Let $F=E/(H,B_H\backslash\{w\})$.  Then,  there is an epimorphism $\varphi: L_K(E) \to L_K(F)$ with $\ker\varphi = P$. The quotient graph $F$ is described as follows:
	$$ 
	\begin{aligned}
		& F^0= (E^0\backslash H) \cup \{w'\};\\
		& F^1=\{e\in E^1: r(e)\not\in H\}\cup \{f':  r(f)=w\},
	\end{aligned}
	$$
	and $r$, $s$ are extended to $F^0$ by setting $s(f')=s(f)$ and $r(f')=w'$ for all $f\in E^1$ with $r(f)=w$.  If $r^{-1}(w)=\varnothing$, then $F^0$ would be the single point $w'$ which means that $L_K(F)$ is commutative, a contradiction. This guarantees that $r^{-1}(w)\ne\varnothing$. Now, take $f$ such that $r(f)=w$. Set $f'=\varphi(f)$. Then $f'$ is an edge in $F^1$ satisfying $r(f')=w'$. Because $w'$ is a sink in $F$, we conclude that $s(f')\ne r(f')={w'}$. In view of Lemma \ref{lem:4.4},  $\varphi(s(f))=s(f')$, $\varphi(w^H)=w'$, $\varphi(fw^H)=f'$ and $\varphi(w^Hf^*)=f'^*$.
	This implies that 
	\begin{align*}
		& 	\varphi\left( 1+2 w^Hf^* \right)  = 1+2 f'^*,\\
		&	\varphi\left( 1+2fw^H \right) = 1+2 f'.
	\end{align*}
	It follows from Lemma  \ref{lem:4.3} that $\left\langle 1+2 f'^*,1+2 f'\right\rangle $ is a non-cyclic free subgroup of $L_K(F)^\times$. Moreover, because $(2w^Hf^*)^2=0$ and $(2fw^H)^2=0$, we get that the inverses of $1+2 w^Hf^*$ and $1+2 fw^H $ are $1-w^Hf^*$ and $1-fw^H$  respectively. Therefore, Lemma \ref{lemma_3.1} implies that $\left\langle 1+2 w^Hf^*,1+2fw^H \right\rangle $ is a non-cyclic free subgroup of $L_K(E)^\times$.
\end{proof}

The following simple example serves an illustration for Theorem \ref{type I}.

\begin{example}
	Let $E$ be the below graph: 
	
	\begin{center}
		\begin{tikzpicture}
			\node at (0,0) (w) {$\bullet$};
			\node at (0,0.3) {$w$};
			\node at (1.5,0) (u) {$\bullet$};
			\node at (1.5,0.3) {$u$};
			\node at (-1.5,0) (v) {$\bullet$};
			\node at (-1.5,0.3) {$v$};
			
			\draw [->] (v) to node[left] {$ $} (w);
			\draw [->] (v) to [out=45,in=135,looseness=1] node[above] {$(\mathbb{N})$} (u);
			\draw [->] (w) to node[below] {$(\mathbb{N})$} (u);
			\draw [->] (w) to [out=-135,in=-45,looseness=8] node[below] {$f$} (w);
			\draw [->] (v) to [out=-135,in=-45,looseness=8] node[below] {$ $} (v);
		\end{tikzpicture}
	\end{center}
	Put $H=\{u\}$. Then $H$ is a hereditary and saturated and $B_H=\{v, w\}$. Then, $P:=I(H, B_H\backslash \{w\})$ is a primitive graded ideal of type I  of $L_K(E)$. It is clear that $w^H=w-ff^*$. Therefore, the following elements will generate a non-cyclic free subgroup in $L_K(E)^\times$:
	$$
	1+(w-ff^*)f^* \text{ and } 1+ f(w-ff^*).
	$$	
\end{example}

\subsection{Free subgroups determined by the primitive ideals of type II}
In this subsection, we study the case when $P$ is of type II.
\begin{lemma}\label{lem:4.6}
	Let $E$ be a graph which contains two distinct tail-equivalent infinite paths $\mu'$ and $\eta'$. Then, there exist two infinite paths of the form $\mu$ and $f\mu$ which are tail-equivalent to $\mu'$ and $\eta'$, where $f\in E^1$ such that $r(f)=s(\mu)$.
\end{lemma}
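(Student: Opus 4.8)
The plan is to unwind the definition of tail-equivalence and then peel off a single edge. By hypothesis $\mu'$ and $\eta'$ are tail-equivalent, so by definition there exist positive integers $m,n$ with $\tau_{>m}(\mu')=\tau_{>n}(\eta')$; denote this common infinite tail by $\nu$. Writing $\mu'=e_1e_2\cdots$, I would then simply take
$$
\mu:=\nu=\tau_{>m}(\mu')\qquad\text{and}\qquad f:=e_m ,
$$
the $m$-th edge of $\mu'$, which is available precisely because the definition of tail-equivalence guarantees $m\ge 1$. The candidate second path is then $f\mu=e_m\nu$.

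The verification is then immediate and consists only of bookkeeping. First, since $\mu'$ is a path, $r(e_m)=s(e_{m+1})$, while $s(\mu)=s\big(\tau_{>m}(\mu')\big)=s(e_{m+1})$; hence $r(f)=s(\mu)$, so $f\mu$ is a legitimate infinite path. Next, $f\mu=e_m\nu=e_m\,\tau_{>m}(\mu')=\tau_{>(m-1)}(\mu')$, which is again a tail of $\mu'$ (and equals $\mu'$ itself when $m=1$, using $\tau_{>0}(\mu')=\mu'$). Thus both $\mu=\tau_{>m}(\mu')$ and $f\mu=\tau_{>(m-1)}(\mu')$ are tails of $\mu'$, so each is tail-equivalent to $\mu'$; for instance $\tau_{>1}(\mu)=\tau_{>(m+1)}(\mu')$ and $\tau_{>1}(f\mu)=\tau_{>m}(\mu')$ exhibit the positive indices required by the definition. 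Finally, since tail-equivalence is transitive and $\mu'$ is tail-equivalent to $\eta'$, the paths $\mu$ and $f\mu$ are tail-equivalent to both $\mu'$ and $\eta'$, which is exactly the claim.

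I do not expect a genuine obstacle here: the whole content is the observation that deleting the first edge of $\mu'$ produces a representative $\mu$ of the same tail-class whose predecessor $f\mu$ differs from it by exactly one prepended edge. The only points needing care are the two bookkeeping checks above, namely invoking the positive-integer convention so that the edge $e_m$ exists, and verifying the source/range compatibility $r(f)=s(\mu)$. I would note that the distinctness of $\mu'$ and $\eta'$ is not actually needed to produce the \emph{form} $\mu,f\mu$; it becomes relevant only if one also wants $\mu$ and $f\mu$ to be \emph{distinct} infinite paths (which is what will make the pair useful for the subsequent two-dimensional representation on $V_{[\mu]}$). In that refinement I would choose $m,n$ so as to minimize $m+n$: if the construction from $\mu'$ accidentally gives $\mu=f\mu$, then $\nu=e_m\nu$ forces $\nu=e_m^\infty$, and the symmetric construction $f:=g_n$ taken from $\eta'=g_1g_2\cdots$ yields $f\mu=g_n\nu\neq\nu$ unless $g_n=e_m$, a coincidence that minimality of $m+n$ together with $\mu'\neq\eta'$ rules out.
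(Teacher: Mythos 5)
Your construction rests on the same basic idea as the paper's proof --- peel one edge off just in front of the common tail $\nu$ --- and it does prove the lemma as literally worded. The genuine difference is how the two arguments secure that $\mu$ and $f\mu$ are actually \emph{two} (distinct) paths, which is the reading the paper needs: the lemma exists to feed Lemma \ref{lem:4.7}, whose hypothesis $s(f)\ne r(f)=s(p)$ in particular forces $fp\ne p$. The paper takes $m,n\ge 0$ \emph{minimal} with $\tau_{>m}(\mu')=\tau_{>n}(\eta')$, uses $\mu'\ne\eta'$ to rule out $(m,n)=(0,0)$, assumes without loss of generality $n\ge 1$, and peels the edge $f_n$ from $\eta'$, i.e.\ from the side whose index is positive; with that choice, $f_n\mu=\mu$ would give $\tau_{>n-1}(\eta')=\tau_{>m}(\mu')$ and contradict minimality, so distinctness comes for free (the paper does not say this explicitly, but its construction delivers it in one step). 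Your primary construction instead peels $e_m$ from $\mu'$, with an \emph{arbitrary} witnessing pair, and never uses $\mu'\ne\eta'$; as you yourself observe, it can collapse: taking a loop $e$ and an edge $g\ne e$ with $r(g)=s(e)$, the paths $\mu'=e^\infty$ and $\eta'=ge^\infty$ are distinct and tail-equivalent, yet with $m=n=1$ your recipe gives $\mu=e^\infty$ and $f\mu=e\cdot e^\infty=\mu$. So for the intended statement the entire burden falls on your final refinement, which is correct in substance but compressed: when both peelings fail you get $\tau_{>m-1}(\mu')=\nu=\tau_{>n-1}(\eta')$, and since you minimized over \emph{positive} pairs, the boundary cases need an extra line --- if $m=1$ then $\mu'=\nu=e_1^\infty$, so $(1,n-1)$ is again a feasible positive pair of smaller sum unless also $n=1$, in which case $\mu'=\eta'$, contradicting the hypothesis. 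Minimizing over $m,n\ge 0$ with the convention $\tau_{>0}=\mathrm{id}$ and then peeling from the side with positive index, as the paper does, avoids both the fallback construction and this case analysis.
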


\begin{proof}
	Since $\mu'$ and $\eta'$ are tail-equivalent infinite paths in $E$, there exist $m$, $n\geq 0$ which are minimal such that $\tau_{> m}(\mu')=\tau_{> n}(\eta')$. Write $\mu'=s(\mu')e_1\cdots e_m\tau_{> m}(\mu')$ and $\eta'=s(\eta')f_1\cdots f_n\tau_{> n}(\eta')$, where $e_i, f_j\in E^1$. Since $\mu'\ne \eta'$, $m$ and $n$ cannot be equal to $0$ at the same time. Without loss of generality, we may assume that $n\geq 1$. Clearly, the paths $\mu=\tau_{> m}(\mu')=\tau_{> n}(\eta')$ and $\eta=f_n\mu$ are infinite of the desired form.
\end{proof}
The following lemma serves as a tool for determining a non-cyclic free subgroup in $L_K(E)$ generated by an infinite path. 

\begin{lemma}\label{lem:4.7}
	Assume that $E$ contains an edge $f$ which is an initial edge of an infinite path $q=fp$ such that $s(f)\ne r(f)=s(p)$. Then $\left\langle 1+2 f^*, 1+2 f\right\rangle $ is a non-cyclic free subgroup of $L_K(E)^{\times}$.
\end{lemma}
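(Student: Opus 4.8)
The plan is to imitate the proof of Lemma \ref{lem:4.3}, replacing the sink module $\mathbf{N}_w$ by the Chen module $V_{[\mu]}$ associated with the infinite path $\mu := p$. First I would observe that $fp = f\mu$ is a legitimate infinite path, since $r(f)=s(p)=s(\mu)$, and that it is tail-equivalent to $\mu$ because $\tau_{>1}(f\mu)=\mu$; hence both $\mu$ and $f\mu$ are basis vectors of $V_{[\mu]}$. They are distinct: their sources are $s(\mu)=r(f)$ and $s(f\mu)=s(f)$, which differ by the hypothesis $s(f)\ne r(f)$. Thus $V:=K\mu\oplus Kf\mu$ is a genuine $2$-dimensional subspace of $V_{[\mu]}$.

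The heart of the argument is the action of $f$ and $f^*$ on the ordered basis $\{\mu,f\mu\}$, computed from the rules defining $V_{[\mu]}$ in Section \ref{simple modules}:
\begin{align*}
	f\cdot \mu &= f\mu, & f\cdot f\mu &= 0,\\
	f^*\cdot \mu &= 0, & f^*\cdot f\mu &= \mu.
\end{align*}
Here $f\cdot f\mu=0$ because $r(f)\ne s(f\mu)=s(f)$, and -- this is the one place where the hypothesis $s(f)\ne r(f)$ is essential -- $f^*\cdot\mu=0$ because the initial edge of $p=\mu$ has source $r(f)\ne s(f)$ and so cannot be $f$, whence $\mu$ does not begin with $f$. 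In particular $f$ and $f^*$ both leave $V$ invariant and act on it as the matrix units $E_{21}$ and $E_{12}$ respectively.

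I would then set $R=\{x\in L_K(E):xV\subseteq V\}$ and $I=\{x\in L_K(E):xV=0\}$, so that $R$ is a unital subring of $L_K(E)$ and $I$ is an ideal of $R$. Restriction of the module action gives a ring homomorphism $\varphi:R\to {\rm End}_K(V)\cong {\rm M}_2(K)$ with $\ker\varphi=I$, under which, in the basis $\{\mu,f\mu\}$, $\varphi(1+2f)=\begin{pmatrix}1&0\\2&1\end{pmatrix}$ and $\varphi(1+2f^*)=\begin{pmatrix}1&2\\0&1\end{pmatrix}$. Since $\varphi(f)=E_{21}$ and $\varphi(f^*)=E_{12}$ already generate ${\rm M}_2(K)$ as a ring, $\varphi$ is surjective. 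Finally, $(2f)^2=0$ and $(2f^*)^2=0$ show that $1+2f$ and $1+2f^*$ are units of $L_K(E)$, with inverses $1-2f$ and $1-2f^*$. Invoking Sanov's lemma (Lemma \ref{lemma_3.5}) for these two matrices and then Lemma \ref{lemma_3.1} for the surjection $\varphi$, I conclude that $\left\langle 1+2f^*,1+2f\right\rangle$ is a non-cyclic free subgroup of $R^\times\subseteq L_K(E)^\times$.

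The main obstacle -- really the only non-formal point -- is verifying that $V$ is two-dimensional and that $f^*$ annihilates $\mu$; both hinge on the hypothesis $s(f)\ne r(f)$, which keeps the sources of $\mu$ and $f\mu$ distinct and prevents $\mu=p$ from beginning with the edge $f$. Everything else (the subring/ideal formalism, surjectivity via matrix units, and the free-subgroup conclusion) is the same bookkeeping as in Lemma \ref{lem:4.3}, merely transported from $\mathbf{N}_w$ to $V_{[\mu]}$.
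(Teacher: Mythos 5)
Your proof is correct, and it rests on the same foundations as the paper's: the same two-dimensional subspace $V$ spanned by $p$ and $q=fp$ inside Chen's module, the same subring $R=\{x\in L_K(E):xV\subseteq V\}$ and ideal $I=\{x\in L_K(E):xV=0\}$, and the same endgame (square-zero elements give the units, then Sanov's matrices via Lemma \ref{lemma_3.5} and the lifting Lemma \ref{lemma_3.1}); your verifications that the hypothesis $s(f)\ne r(f)$ is exactly what forces $f\cdot f\mu=0$ and $f^*\cdot\mu=0$ are also the right ones. Where you genuinely depart from the paper is the middle step, and your route is markedly shorter. The paper obtains its surjection $R\to\mathrm{M}_2(K)$ indirectly: it runs a case analysis over all monomials $\gamma\lambda^*$ carrying $V$ into $V$ (the four families $S_1,\dots,S_4$ in its proof), shows each such monomial is congruent modulo $I$ to one of $s(f)$, $f$, $f^*$, $r(f)$, deduces that these four classes form a $K$-basis of $R/I$, and only then defines the homomorphism to $\mathrm{M}_2(K)$ via coordinates. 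You bypass the determination of $R/I$ altogether: the restriction map $\varphi:R\to\mathrm{End}_K(V)\cong\mathrm{M}_2(K)$ is tautologically a homomorphism with kernel $I$, and it is onto because its image contains $\varphi(f)=E_{21}$ and $\varphi(f^*)=E_{12}$. One small correction there: $E_{12}$ and $E_{21}$ generate $\mathrm{M}_2(K)$ as a $K$-algebra, not as a ring (as a ring they only generate matrices over the prime subring); this is harmless, since the image of $\varphi$ is a $K$-subalgebra ($R$ is closed under scalar multiples and $\varphi$ is $K$-linear), or, even more cheaply, since Lemma \ref{lemma_3.1} can be applied to the corestriction of $\varphi$ onto its image, which visibly contains the two Sanov matrices together with their inverses $\varphi(1-2f)$ and $\varphi(1-2f^*)$. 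In sum, your argument is a valid and appreciably more economical proof of the lemma; what it forgoes, relative to the paper's longer computation, is only the explicit by-product that $R/I\cong\mathrm{M}_2(K)$ with basis $\{s(f)+I,\,f+I,\,f^*+I,\,r(f)+I\}$.
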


\begin{proof}
	
	Let $V_{[q]}$ be the $K$-space defined by $q$ as in Section 2(A). Consider the $K$-subspace $V=Kp\oplus Kq$ of $V_{[q]}$. Then, $V$ is a $K$-space with a basis $\{p, q\}$, so $\dim_KV=2$. Put
	$$
	R=\{x\in L_K(E): xV\subseteq V\} \text{ and } I=\{x\in L_K(E): x V=0\}. 
	$$ 
	Then, $R$ is a subring of $L_K(E)$ and $I$ is an ideal of $R$. Therefore, we can view $R/I$ as a $K$-space in an obvious way. Next, we are going to find a $K$-basis for $R/I$. To do this, take  $\gamma, \lambda\in{\rm Path}(E)$ with $r(\gamma)=r(\lambda)$, and $v = a_1p+a_2q\in V$, where $a_1, a_2\in K$. We have
	$$
	a_1\gamma\lambda^* p = \begin{cases} 
		a_1\gamma\eta      & \text{ if }   p=\lambda\eta  \text{ for some }\eta\in {\rm Path}(E), \\
		0      & \text{ otherwise}.
	\end{cases}
	$$
	$$
	a_2\gamma\lambda^* q = \begin{cases} 
		a_2\gamma\beta   &  \text{ if }  q=\lambda\beta \text{ for some }  \beta\in{\rm Path}(E),\\
		0      & \text{ otherwise}.
	\end{cases}
	$$
	Since $s(p)\ne s(q)$, the cases $p=\lambda\eta$ and $q=\lambda\beta$ cannot happen at the same time. This implies that
	$$
	\gamma\lambda^*v= \gamma\lambda^* (a_1p+a_2q)=    \begin{cases} 
		a_1\gamma\eta      & \text{ if }  p=\lambda\eta,\\
		a_2\gamma\beta   &  \text{ if }  q=\lambda\beta,\\
		0      & \text{ otherwise}.
	\end{cases}
	$$
	Therefore, $\gamma\lambda^*v\in V$ if and only if either $\gamma\lambda^*v=0$ or one of the following cases occurs:
	
	\medskip 
	
	\textit{\textbf{Case 1}: $p=\lambda\eta$ and $\gamma\eta\in V$}. The first assertion implies that $\lambda=\tau_{\le n}(p)$ for some $n\geq0$. Since $\gamma\eta\in V$, it follows that $\gamma\eta=p$ or $\gamma\eta=q$. We consider two subcases:
	
	\medskip 
	
	\textit{Subcase 1.1}: $\gamma\eta=p$. In this case, the computation above shows that $\gamma\lambda^*v=a_1p$. Moreover, we have $\gamma\eta=p=\lambda\eta$, which implies that $\lambda^*\gamma \eta=\eta$. It follows 
    that $\lambda^*\gamma \ne0$, and so $\lambda = \gamma$ by (CK1). Therefore, $$\gamma\lambda^*\in S_1:=\{\tau_{\le n}(p)\tau_{\le n}(p)^*: n\ge~0\}.$$
    
	\medskip 
	
	\textit{Subcase 1.2}: $\gamma\eta=q$. In this case, the computation above shows that $\gamma\lambda^*v=a_1q$. Because $\lambda\eta=p$ and $\gamma\eta=q=fp$, we conclude that $f\lambda\eta=\gamma\eta$, and so $f\lambda=\gamma$. Therefore, we get $\gamma=f\tau_{\le n}(p)$, from which it follows that 
	$$\gamma\lambda^*\in S_2:=\{f\tau_{\le n}(p)\tau_{\le n}(p)^*: n\ge~0\}.$$
	
	\medskip 
	

	\textit{\textbf{Case 2}: $q=\lambda\beta$ and $\gamma\beta\in V$}. The fact $q=\lambda\beta$ implies that $\lambda= \tau_{\le m}(q)$ for some $m\geq0$. As $\gamma\beta\in V$, either $\gamma\beta=p$ or $\gamma\beta=q$. Consider two subcases: 
	
	\medskip 
	
	\textit{Subcase 2.1}: $\gamma\beta=p$.  In this case, the computation above shows that $\gamma\lambda^*v=a_2p$. As $\gamma\beta=p$ and $\lambda\beta=q=fp$, we get $f\gamma\beta=\lambda\beta$, yielding $\lambda=f\gamma$. This implies that $m\geq1$ and $\gamma=\tau_{\le m-1}(p)$, and so $$\gamma\lambda^*\in S_3:=\{\tau_{\le m-1}(p)\tau_{\le m-1}(p)^*f^*: m\ge~1\}.$$
	
	\medskip 
	
	\textit{Subcase 2.2}: $\gamma\beta=q$. In this case, the computation above shows that $\gamma\lambda^*v=a_2q$. As $\gamma\beta=q= \lambda\beta$, we get $\lambda=\beta$. Thus, 
	$$\gamma\lambda^*\in S_4:=\{\tau_{\le m}(q)\tau_{\le m}(q)^*: m\ge~0\}.$$
	
	\medskip 
	
	The computations we just made allow us to conclude that $\gamma\lambda^* V\subseteq V$ if and only if $\gamma\lambda^* V=0$ (which means $\gamma\lambda^*\in I$), or else $\gamma\lambda^*\in\bigcup_{j=1}^4S_j$. This means that each element of $R/I$ has the form $
	\sum_{i =1}^k k_i\gamma_i\lambda_i^*+I$ where  $k_i\in K$ and $\gamma_i\lambda_i^*\in\bigcup_{j=1}^4S_j$ for all $1\leq i\leq k$. 

Now, for any monomial $\gamma\lambda^*$, and arbitrary element $v=ap+bq\in V$ ($a, b\in K$), there are four cases to consider. 	
 
(1) Assume that $\gamma\lambda^*\in S_1$. Then, we have
 	\begin{eqnarray*}
 		\gamma\lambda^*v
 		&=&\tau_{\le n}(p)\tau_{\le n}(p)^*(ap+bq)\\
 		&=&a\tau_{\le n}(p)\tau_{\le n}(p)^*\tau_{\le n}(p)\tau_{>n}(p)+0\\
 		&=&ap.
 	\end{eqnarray*}
 	On the other hand, we also have
 	\begin{eqnarray*}
 		r(f)v
 		&=&r(f)(ap+bq)\\
 		&=&ap+br(f)q\\
 		&=&ap+0=ap.
 	\end{eqnarray*}
 	Hence, $\left(\gamma\lambda^*-r(f)\right) v=0$, which means that $\gamma\lambda^*+I=r(f)+I$.
 
(2) Assume that $\gamma\lambda^*\in S_2$. Then,	
 	\begin{eqnarray*}
 		\gamma\lambda^*v
 		&=&f\tau_{\le n}(p)\tau_{\le n}(p)^*(ap+bq)\\
 		&=&afp=aq.
 	\end{eqnarray*}
 	On the other hand, we have
 	\begin{eqnarray*}
 		fv
 		&=&f(ap+bq)\\
 		&=&afp+0=aq.
 	\end{eqnarray*}
 	Hence, it follows that $\gamma\lambda^*+I=f+I$.
 
(3) Assume that $\gamma\lambda^*\in S_3$. Then,
 	\begin{eqnarray*}
 		\gamma\lambda^*v
 		&=&\tau_{\le m-1}(p)\tau_{\le m-1}(p)^*f^*(ap+bq)\\
 		&=&0+b\tau_{\le m-1}(p)\tau_{\le m-1}(p)^*f^*q\\
 		&=&b\tau_{\le m-1}(p)\tau_{\le m-1}(p)^*p\\
 		&=&b\tau_{\le m-1}(p)\tau_{\le m-1}(p)^*\tau_{\le m-1}(p)\tau_{> m-1}(p)\\
 		&=&bp.
 	\end{eqnarray*}
 	On the other hand, we have $f^*v=f^*(ap+bq)=bp$. Hence, $\gamma\lambda^*+I=f^*+I$.	
 
(4) Assume that $\gamma\lambda^*\in S_4$. Then,
 	\begin{eqnarray*}
 		\tau_{\le m}(q)\tau_{\le m}(q)^*v
 		&=&\tau_{\le m}(q)\tau_{\le m}(q)^*(ap+bq)\\
 		&=&0+b\tau_{\le m}(q)\tau_{\le m}(q)^*\tau_{\le m}(q)\tau_{> m}(q)=bq.
 	\end{eqnarray*}
 	On the other hand, we have $s(f)v=s(f)(ap+bq)=bq$. Hence, $\gamma\lambda^*+I=s(f)+I$. The arguments above allows us to conclude that $R/I$ is described precisely as follows:
	$$
	R/I=\left\lbrace as(f)+bf+cf^*+dr(f)+I\;|\; a, b, c, d\in K\right\rbrace .
	$$
	The same argument used in the proof of Lemma \ref{lem:4.3} shows that  
	$$\{s(f)+I, f+I, f^*+I, r(f)+I\}$$
	is a $K$-basis of $R/I$. Now, by the same argument as in the proof of Lemma \ref{lem:4.3}, we conclude that $\left\langle 1+2f^*, 1+2f\right\rangle $ is a  non-cyclic free subgroup of $R^\times \subseteq L_K(E)^\times$.
\end{proof}

We are now in the position to present the main theorem of this subsection.
\begin{theorem}\label{type II}
	Let $P$ be a primitive ideal of $L_K(E)$ such that $L_K(E)/P$ is noncommutative. 
	Assume that $P$ is of type {\rm II}.
	Then $E$ contains an edge $f$ which is an initial edge of an infinite path or ends in a sink such that $ \left\langle 1+2f^*,  1+2 f\right\rangle $ is a non-cyclic free subgroup of $L_K(E)^{\times}$.
\end{theorem}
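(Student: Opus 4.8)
The plan is to realize $P$ by a concrete simple module and then feed the resulting graph data into Lemmas~\ref{lem:4.3} and~\ref{lem:4.7}. Writing $H=P\cap E^0$, the assumption that $P$ is of type~{\rm II} means $P=I(H,B_H)$ is graded, so by Remark~\ref{rem:1.2} there is an epimorphism $\varphi\colon L_K(E)\to L_K(F)$ with kernel $P$, where $F=E/(H,B_H)$; thus $L_K(F)\cong L_K(E)/P$ is noncommutative and, by Theorem~\ref{th:4.1}, $F$ satisfies Condition~${\rm (L)}$. By Proposition~\ref{proposition_2.4} I may write $P=\mathrm{Ann}_{L_K(E)}(V)$ for a simple module $V$, and since $P$ is graded, the module realizing it may be taken to be one of the untwisted Chen modules, i.e.\ either $\mathbf{N}_w$ for a sink $w$ of $E$ or $V_{[\mu]}$ for an infinite path $\mu$ (the twisted modules $V_{[\mu]}^f$ account for the non-graded type~{\rm III} ideals and $\mathbf{S}_{v\infty}$ for type~{\rm I}).

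The decisive point is that noncommutativity forbids $V$ from being one-dimensional: by Lemma~\ref{lemma_2.1} one has $\mathrm{End}_{L_K(E)}(V)\cong K$, so $L_K(E)/P$ acts densely on $V$ over $K$, and $\dim_K V=1$ would force $L_K(E)/P\cong K$ to be commutative; hence $\dim_K V\ge 2$. I would split into the two module types accordingly.

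If $V=\mathbf{N}_w$, then $\dim_K\mathbf{N}_w\ge 2$ gives a finite path of positive length ending at $w$, hence an edge $f\in E^1$ with $r(f)=w$ (and $s(f)\ne w$, as $w$ is a sink). Lemma~\ref{lem:4.3} then applies verbatim and shows that $\langle 1+2f^*,\,1+2f\rangle$ is a non-cyclic free subgroup of $L_K(E)^\times$, with $f$ ending in a sink. If $V=V_{[\mu]}$, then $|[\mu]|\ge 2$, so $E$ contains two distinct tail-equivalent infinite paths; Lemma~\ref{lem:4.6} returns infinite paths $\mu_0$ and $f\mu_0$ with $r(f)=s(\mu_0)$, and Lemma~\ref{lem:4.7}, applied to $q=f\mu_0=fp$ with $p=\mu_0$, yields exactly the free subgroup $\langle 1+2f^*,\,1+2f\rangle$ of $L_K(E)^\times$, with $f$ an initial edge of an infinite path. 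In either case the relations $(2f)^2=(2f^*)^2=0$ make $1+2f$ and $1+2f^*$ units, and one may invoke Lemma~\ref{lemma_3.1} if a passage through $L_K(F)$ is preferred.

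The one genuine obstacle is hidden in the second case: Lemma~\ref{lem:4.7} requires $s(f)\ne r(f)$, whereas the edge produced by Lemma~\ref{lem:4.6} is merely the place where the two tail-equivalent paths first diverge and could a priori be a loop --- and note that $(2f)^2=0$, hence the invertibility of the generators, fails precisely when $f$ is a loop. Removing this requires using Condition~${\rm (L)}$ together with the finiteness of $F^0$: tracing edges forward from $r(f)$ through the finite vertex set, one must reach a sink or a cycle, and Condition~${\rm (L)}$ guarantees that any cycle has an exit; a short recursion should then relocate the divergence to a genuine non-loop edge that still begins an infinite path or ends in a sink. The delicate subcase --- where every forward continuation is itself a loop, so that the two paths diverge only through loops --- is the crux, and it is here that Condition~${\rm (L)}$ and a careful choice of the two tail-equivalent paths must do the real work.
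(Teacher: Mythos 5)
Your architecture parallels the paper's --- a sink case fed into Lemma~\ref{lem:4.3} and an infinite-path case fed into Lemmas~\ref{lem:4.6} and~\ref{lem:4.7} --- except that the paper obtains this dichotomy inside the quotient graph $F=E/(H,B_H)$ and pulls units back along the epimorphism of Remark~\ref{rem:1.2} via Lemma~\ref{lemma_3.1}, while you try to stay inside $E$ by realizing $P$ as the annihilator of a simple module. The obstacle you flagged at the end is a genuine gap, and it cannot be closed: it is fatal not only to your argument but to the statement itself. Let $E$ be the rose with two petals, one vertex $v$ and two loops $e,g$. Then $L_K(E)$ is the Leavitt algebra $L_K(1,2)$, simple and noncommutative, so $P=0$ is its unique primitive ideal, and it is of type II: $H=B_H=\varnothing$, MT-3 and countable separation are trivial, and Condition (L) holds because each loop is an exit for the other. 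Here every edge is a loop, so no choice of tail-equivalent paths can ever produce $s(f)\neq r(f)$, and Condition (L) plus finiteness of $E^0$ is powerless. Worse, $1+2e$ is not even a unit: in the Chen module $V_{[e^\infty]}$, solving $(1+2e)x=ge^\infty$ forces the coefficient of $e^kge^\infty$ in $x$ to equal $(-2)^k$ for every $k\geq 0$, contradicting finiteness of supports; the same holds for $1+2g$, and applying the involution, for $1+2e^*$ and $1+2g^*$. (Non-cyclic free subgroups do exist in $L_K(1,2)^\times$, e.g. $\left\langle 1+2ge^*,\, 1+2eg^*\right\rangle$, because $ee^*, eg^*, ge^*, gg^*$ are matrix units giving a unital copy of ${\rm M}_2(K)$; but these are not of the form the theorem asserts.) In isolating this point you have in fact found an error in the paper itself: its Case 2 claims that Lemma~\ref{lem:4.6} yields $s(f)\neq r(f)=s(\mu)$, whereas the lemma states and proves only $r(f)=s(\mu)$, and the example above shows the stronger property can genuinely fail. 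So your instinct was correct, but the ``crux'' you postponed is unresolvable; what is needed is a different form of generators on such graphs, not a cleverer choice of paths.

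There is a second gap you did not flag: the reduction of graded type II ideals to \emph{untwisted Chen modules over $E$} is false, because the modules $\mathbf{S}_{v\infty}$ do not serve only type I. If $v$ is an infinite emitter all of whose edges land in $H=E^0\setminus M(v)$, then $v\notin B_H$ and ${\rm Ann}(\mathbf{S}_{v\infty})=I(H,B_H)$ is graded of type II. Concretely, let $E^0=\{u,w,v\}$ with a single edge $u\to w$, infinitely many edges $w\to v$, and $v$ a sink. Then $P=I(\{v\})$ is a type II primitive ideal with $L_K(E)/P\cong {\rm M}_2(K)$ noncommutative, yet $E$ has no infinite path at all and ${\rm Ann}(\mathbf{N}_v)=0\neq P$, so neither of your two cases occurs and your case analysis is not exhaustive. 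This is precisely why the paper runs Lemma~\ref{lem:4.3} inside $L_K(F)$ --- the relevant vertex is a sink of $F$ though possibly an infinite emitter of $E$ --- and then transfers the free subgroup to $L_K(E)^\times$ with Lemma~\ref{lemma_3.1}, instead of applying Lemma~\ref{lem:4.3} verbatim in $E$. (In this example the conclusion still holds via an edge $w\to v$ into the sink $v$, but one finds that by inspecting $E$, not by your module dichotomy; note also that the edge produced here fails the theorem's own description of $f$, another symptom of the statement needing repair.)
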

\begin{proof}
	Put $H=P\cap E^0$. If we set $F=E/(H,B_H)$, then in view of Remark \ref{rem:1.2}, there is an epimorphism  $\varphi: L_K(E) \to L_K(F)$ with $\ker\varphi = P$. In this case, the quotient graph $F$ is described as follows:
	$$ 
	F^0= E^0\backslash H \text{ and } F^1=\{e\in E^1: r(e)\not\in H\}, \eqno(1)
	$$
	and $r$, $s$ are restricted to $F^1$. According to the proof of \cite[Theorem 3.9]{Pa_ara_rangas_2014}(part (iii)), either there exists a unique sink $w$ in $F$ such that $F^0=M(w)$, or there is an infinite path $\mu_0$ in $F$ such that $F^0=M(\mu_0)$. Hence, there are two possible cases:
	
	\smallskip 
	
	\textit{\textbf{Case 1}: $F^0$ contains a unique sink $w$ such that $F^0=M(w)$.} Consider $w$ as a vertex in $E^1$, we see that $w$ is either a sink or an infinite emitter in $E$ such that $r(s^{-1}(w)\subseteq H$. If $r^{-1}(w)=\varnothing$ then $F$ would be the single vertex $w$ which says that $L_K(F)$ is commutative, a contradiction. Therefore $r^{-1}(w)\ne\varnothing$, and thus we may take $f\in F^1\subseteq E^1$ with $r(f)=w$.  Now, by Remark \ref{rem:1.2}, we have $\varphi(f)=f$ and $\varphi(w)=w$. Since $w$ is a sink in $F$, we deduce that $s(f)\ne r(f)=w$ in $F$. Therefore, we may apply Lemma  \ref{lem:4.3} to conclude that $\left\langle 1+2 f^*, 1+2 f\right\rangle $ is a non-cyclic free subgroup of $L_K(F)^\times$. Also, it is easy to check that $1+2 f^*$ and $1+2 f$ are invertible in $L_K(E)^{\times}$. As $ 1+2 f^*$ and $ 1+2 f$ are fixed by $\varphi$, we conclude that $\left\langle 1+2 f^*, 1+2 f\right\rangle $ is a non-cyclic free subgroup of $(L_K(E))^\times$. 
	
	\smallskip 
	
	\textit{\textbf{Case 2}: There is an infinite path $\mu_0$ in $F$ such that $F^0=M(\mu_0)$}: Because $F$ is a subgraph of $E$, we can consider $\mu_0$ as an infinite path in $E$ which does not end in an exclusive cycle in $E$ such that $E^0=M(\mu_0)$. Let $V_{[\mu_0]}$ be the faithful $L_K(F)$-simple module determined by the infinite path $\mu_0$; that is, 
	$$
	V_{[\mu_0]}=\bigoplus_{\eta\in [\mu_0]}K\eta,
	$$
	a $K$-vector space whose basis consists of all infinite paths $\eta$ which are tail- equivalent to $\mu_0$. According to Lemma \ref{lemma_2.1}, we conclude that ${\rm End}_{L_K(F)}(V_{[\mu_0]})\cong K$. If $L_K(F)$ is artinian, then \cite[Theorem 11.19]{Bo_lam_2001} implies that $L_K(F)\cong {\rm M}_n(K)$, where $n=\dim_K(V_{[\mu_0]})$ which is the number of infinite paths in $F$ tail-equivalent to $\mu_0$. Since $L_K(F)$ is noncommutative, it follows that $n\geq 2$. If $L_K(F)$ is non-artinian, then $\dim_K(V_{[\mu_0]})=~\infty$. In either case, we have $\dim_K(V_{[\mu_0]})\geq 2$, and so there exist in $F$ two infinite distinct paths $\mu'$ and $\eta'$ which are tail-equivalent to $\mu_0$. According to Lemma \ref{lem:4.6}, we conclude that there exist two infinite paths $\mu$ and $f\mu$, where $f\in F^1\subseteq E^1$ with $s(f)\ne r(f)=s(\mu)$. According to Lemma \ref{lem:4.7}, we obtain that $\left\langle 1+2 f^*, 1+2 f\right\rangle $ is a non-cyclic free subgroup of $(L_K(F))^\times$. The same argument as in previous case shows that $\left\langle 1+2 f^*, 1+2 f\right\rangle $ is also a non-cyclic free subgroup of $(L_K(E))^\times$.
\end{proof}
\begin{example}
	Let $E$ be the following graph:
	\begin{center}
		\begin{tikzpicture}
			\node at (3,0) (v) {$\bullet$};
			\node at (3,-0.3) {$v$};
			\node at (0,0) (w) {$\bullet$};
			\node at (0,-0.3) {$w$};
			\node at (1.5,0) (u) {$\bullet$};
			\node at (1.5,-0.3) {$u$};
			
			\draw [->] (u) to node[above] {$f$} (v);
			\draw [->] (u) to node[above] {$g$} (w);
			\draw [->] (u) to [out=135,in=45,looseness=8] node[above] {$e$} (u);
		\end{tikzpicture}
	\end{center}
	Then  $H=\{w\}$  is a hereditary and saturated subset of $E^0$ and $P:=I(H)$ is a primitive ideal of type II (note that $B_H=\varnothing$). Also $E^0\backslash H=\{u,v\}=M(v)$. According to Theorem \ref{type II}, we conclude that $\left\langle 1+2 f^*, 1+2 f\right\rangle$ is a non-cyclic free subgroup of  $L_K(E)^\times$.
\end{example}
\begin{example}
	Consider the graphs $E$ and $F$ as follows:
	
	\begin{center}  \begin{figure}[!htb]   \begin{minipage}{0.5\textwidth}    \centering    \begin{tikzpicture}     \node at (-2,0) (o) {$E:$};     \node at (1,0) (v1) {$\bullet$};     \node at (0.7,0) {$v_1$};     \node at (-1,0) (v3) {$\bullet$};     \node at (-1.3,0) {$v_3$};     \node at (0,1) (v4) {$\bullet$};     \node at (0,0.7) {$v_4$};     \node at (0,-1) (v2) {$\bullet$};     \node at (0,-0.7) {$v_2$};     \node at (2.5,0) (u) {$\bullet$};     \node at (2.8,0) {$u$};          \draw [->] (v1) to node[above] {$f$} (u);     \draw [->] (v3) to [out=90,in=180,looseness=0.8] node[above] {$ $} (v4);     \draw [->] (v4) to [out=0,in=90,looseness=0.8] node[above] {$ $} (v1);     \draw [->] (v1) to [out=-90,in=0,looseness=0.8] node[above] {$ $} (v2);     \draw [->] (v2) to [out=180,in=-90,looseness=0.8] node[above] {$ $} (v3);     \draw [->] (v3) to [out=45,in=-45,looseness=7] node[above] {$ $} (v3);    \end{tikzpicture}   \end{minipage}\hfill   \begin{minipage}{0.5\textwidth}    \centering    \begin{tikzpicture}     \node at (-2,0) (o) {$F:$};     \node at (1,0) (v1) {$\bullet$};     \node at (0.7,0) {$v_1$};     \node at (-1,0) (v3) {$\bullet$};     \node at (-1.3,0) {$v_3$};     \node at (0,1) (v4) {$\bullet$};     \node at (0,0.7) {$v_4$};     \node at (0,-1) (v2) {$\bullet$};     \node at (0,-0.7) {$v_2$};         \draw [->] (v3) to [out=90,in=180,looseness=0.8] node[above] {$ $} (v4);     \draw [->] (v4) to [out=0,in=90,looseness=0.8] node[above] {$ $} (v1);     \draw [->] (v1) to [out=-90,in=0,looseness=0.8] node[above] {$ $} (v2);     \draw [->] (v2) to [out=180,in=-90,looseness=0.8] node[above] {$ $} (v3);     \draw [->] (v3) to [out=45,in=-45,looseness=7] node[above] {$ $} (v3);    \end{tikzpicture}   \end{minipage}  \end{figure} 
	\end{center}
	
	Then $H=\{u\}$ is a hereditary and saturated subset of $E^0$ and $E/H=F$. It can be checked that $I(H)$ is a primitive ideal of type II and and $L_K(E)/I(H)\cong L_K(F)$ which is noncommutative. Let $c=e_1e_2e_3e_4$ be a cycle in $E$ and $\mu=c^\infty$. Then $\mu$ is an infinite path in $E$ with $E^0\backslash H=M(\mu)$. Also, the edge $e_4$ satisfies the condition $v_4=s(e_4)\ne r(e_4)=v_1=s(\mu)$. According to Theorem \ref{type II}, we conclude that $\left\langle 1+2 e_4^*, 1+2 e_4\right\rangle$ is a non-cyclic free subgroup of  $L_K(E)^\times$.
	
	We note in passing that there are many choices of infinite paths in $E$ which produce different non-cyclic free subgroups. For example, we may choose $\eta=e^\infty$ and the edge $e_2$ that satisfies $s(e_2)\ne r(e_2)=s(\eta)$.
\end{example}
As we have noted in Remark \ref{rem:4.2}, being a primitive ideal of type II, it is possible that $P=0$. If it is the case, then $L_K(E)$ is a primitive algebra. As it was proven in \cite[Thereom 3.10]{Pa_ara_rangas_2014}, the Leavitt path algebra $L_K(E)$ is primitive if and only if E contains either a sink $w$ or an infinite path $\mu$ that does not end in a cycle without exits, such that $E^0=M(w)$ or $M(\mu)$. Building on this result, the following corollary of Theorem \ref{type II} provides a description of the free subgroups in the multiplicative group of a primitive Leavitt path algebra.

\begin{corollary}\label{cor:4.9}
	If $L_K(E)$ is a primitive, then $E$ contains an $f$ which is an initial edge of an infinite path $\mu$ that does not end in a cycle without exits, or ends in a sink such that $ \left\langle 1+2f^*,  1+2 f\right\rangle $ is a non-cyclic free subgroup of $L_K(E)^{\times}$.
\end{corollary}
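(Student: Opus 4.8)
The plan is to deduce this immediately from Theorem \ref{type II} by applying that theorem to the zero ideal. Since $L_K(E)$ is primitive, the zero ideal $P:=0$ is a primitive ideal of $L_K(E)$, and $L_K(E)/P=L_K(E)$ is noncommutative (indeed a commutative primitive ring is a field, namely $K$ itself, a graph with no edges, so the noncommutative case is the only one in which there is an edge $f$ to exhibit). First I would invoke Remark \ref{rem:4.2}, which records that primitive ideals of type I and of type III are always nonzero. Hence the primitive ideal $P=0$ is forced to be of type II, so all the hypotheses of Theorem \ref{type II} are satisfied with $P=0$.

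Applying Theorem \ref{type II} to $P=0$ then produces an edge $f\in E^1$ which is either an initial edge of an infinite path or ends in a sink, such that $\left\langle 1+2f^*,\,1+2f\right\rangle$ is a non-cyclic free subgroup of $L_K(E)^\times$. This already yields the assertion except for the finer description that, in the infinite-path alternative, the path does not end in a cycle without exits. To supply that refinement, I would specialize the proof of Theorem \ref{type II} to the case $H=\varnothing$, $F=E$: Case~1 there returns a sink $w$ with $E^0=M(w)$ and an edge $f$ with $r(f)=w$, while Case~2 returns an infinite path $\mu_0$ with $E^0=M(\mu_0)$ together with an initial edge $f$ of a path tail-equivalent to $\mu_0$ (via Lemmas \ref{lem:4.6} and \ref{lem:4.7}). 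The primitivity criterion cited from \cite{Pa_ara_rangas_2014} guarantees that, for a primitive $L_K(E)$, the relevant infinite path may be taken not to end in a cycle without exits, and I would read off $f$ as an initial edge of such a path.

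The step that will require the most care is the terminological matching: the proof of Theorem \ref{type II} phrases its Case~2 condition using the notion of an \emph{exclusive} cycle, whereas the primitivity characterization of \cite{Pa_ara_rangas_2014} is stated in terms of a \emph{cycle without exits}. I would verify that, for the zero ideal (where $F=E$ and the Condition~$(\mathrm{L})$ built into the type~II hypothesis applies to $E$ itself), the infinite path furnished by the primitivity criterion genuinely does not end in a cycle without exits, and that the edge $f$ constructed in Lemmas \ref{lem:4.6}--\ref{lem:4.7} is indeed an initial edge of such a path. The remaining verifications—the invertibility of $1+2f^*$ and $1+2f$ (coming from $(2f)^2=(2f^*)^2=0$) and the transfer of the free-subgroup property back along the identity $L_K(E)/0=L_K(E)$—are immediate and are already contained in the proof of Theorem \ref{type II}.
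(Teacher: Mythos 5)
Your proposal is correct and takes essentially the same route as the paper: the paper's own proof likewise reduces the corollary to Theorem \ref{type II}, observing that $E$ itself satisfies the conditions of the quotient graph $F$ in that proof (equivalently, that the zero ideal is a primitive ideal of type II), and then reruns the sink/infinite-path cases via Lemmas \ref{lem:4.3}, \ref{lem:4.6} and \ref{lem:4.7}. Your explicit appeal to Remark \ref{rem:4.2} to force $P=0$ to be of type II, and your observation that Condition (L) makes the ``does not end in a cycle without exits'' refinement automatic, merely formalize what the paper asserts informally.
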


\begin{proof} 
	In the proof of Theorem \ref{type II}, we see that the quotient graph $F$ contains either a sink or an infinite path, namely $w$ and $\mu$ respectively, and an edge $f$ with $r(f)=w$ or $s(\mu)$. Under these conditions we have proved that $L_K(F)^{\times}$ contains a non-cyclic free subgroup. Since the graph $E$ in the present corollary satisfies all conditions as for the graph $F$ defined in the proof of Theorem \ref{type II}, the proof of the existence of non-cyclic free subgroups in $L_K(F)$ is applied similarly for $L_K(E)$ here. Hence, we can conclude that the proof of the corollary is complete. 
\end{proof}

\subsection{Free subgroups determined by the primitive  ideals of type III}
This subsection is devoted to the study the case when $P$ is of type III.
\begin{theorem}\label{th:4.10}
	Let $P$ be a primitive ideal of $L_K(E)$ such that $L_K(E)/P$ is noncommutative. 
	Assume that $P$ is of type {\rm III}.
	Then $E$ contains an edge $f$ which is the initial edge of an infinite path $\mu$ tail-equivalent to $c$ such that $ \left\langle 1+2f^*,  1+2 f\right\rangle $ is a non-cyclic free subgroup of $L_K(E)^{\times}$.
\end{theorem}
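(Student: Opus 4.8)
The plan is to reduce the whole statement to Lemma~\ref{lem:4.7}: it suffices to exhibit an edge $f$ of $E$ that is the initial edge of an infinite path tail-equivalent to $c^\infty$ and that satisfies $s(f)\neq r(f)$, since then Lemma~\ref{lem:4.7} produces the free subgroup $\langle 1+2f^*,1+2f\rangle$ of $L_K(E)^\times$ directly. Write $c=e_1e_2\cdots e_m$ for the exclusive cycle based at $u$ and set $\mu=c^\infty$; recall from the type~III hypothesis that $c$ has no exit in $F:=E/(H,B_H)$ and that $E^0\setminus H=M(u)$. First I would record, using Proposition~\ref{proposition_2.4} and Lemma~\ref{lemma_2.2}, that the simple module attached to $P$ can be taken to be $V_{[\mu]}^f$ with $k:={\rm End}_{L_K(E)}(V_{[\mu]}^f)\cong K'=K[x,x^{-1}]/(f(x))$, so that $L_K(E)/P$ embeds densely into ${\rm End}_{K'}(V_{[\mu]}^f)$. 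Since $\dim_{K'}V_{[\mu]}^f$ is exactly the number of infinite paths tail-equivalent to $\mu$, the assumption that $L_K(E)/P$ is noncommutative forces this dimension to be at least $2$ (a one-dimensional $V$ would embed $L_K(E)/P$ into the field $K'$). Hence $[\mu]$ contains a path $\eta\neq\mu$.

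Next I would split on the length $m$. If $m\geq 2$, I would take $f=e_m$ and $p=c^\infty$; then $q:=fp=e_me_1e_2\cdots$ is the rotation of $c^\infty$ based at $s(e_m)$, hence tail-equivalent to $\mu$, and since the vertices of a cycle are pairwise distinct we have $s(e_m)=r(e_{m-1})\neq s(e_1)=u=r(e_m)$, so both hypotheses $r(f)=s(p)$ and $s(f)\neq r(f)$ of Lemma~\ref{lem:4.7} hold. If $m=1$, then $c=e_1$ is a loop at $u$, and the no-exit condition says that $e_1$ is the only edge of $F$ with source $u$. Writing the path $\eta\neq\mu$ from the previous paragraph as $\eta=h_1h_2\cdots$, I would let $N$ be least with $h_j=e_1$ for all $j>N$; then $N\geq1$ and $h_N\neq e_1$, while $r(h_N)=s(e_1)=u\notin H$ places $h_N$ in $F^1$. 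The no-exit property now forces $s(h_N)\neq u$ (an edge of $F$ into $u$ with source $u$ and distinct from $e_1$ would be an exit), so taking $f=h_N$ and $p=e_1^\infty=c^\infty$ again satisfies the hypotheses of Lemma~\ref{lem:4.7}, with $q=fp$ tail-equivalent to $\mu$.

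In either case Lemma~\ref{lem:4.7} yields that $\langle 1+2f^*,1+2f\rangle$ is a non-cyclic free subgroup of $L_K(E)^\times$, and by construction $f$ is the initial edge of the infinite path $q$ that is tail-equivalent to $c$, as required. The step I expect to be the main obstacle is the case $m=1$, where noncommutativity is not automatic and the real content is the passage from the algebraic hypothesis to the combinatorial fact that some edge other than the loop enters $u$. This rests on the identification of $\dim_{K'}V_{[\mu]}^f$ with the cardinality of $[\mu]$ — which I would justify by noting that the twist $\sigma$ defining $V_{[\mu]}^f$ leaves the underlying $K'$-basis $[\mu]$ unchanged — together with the no-exit condition and the heredity of $H$, which guarantee that the entering edge lies in $F^1$ and has source different from $u$.
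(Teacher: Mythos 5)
Your proof is correct, and while it follows the paper's overall skeleton --- obtain two distinct infinite paths tail-equivalent to $c^\infty$, extract from them an edge $f$ with $s(f)\ne r(f)=s(\mu)$, then conclude with Lemma \ref{lem:4.7} --- you implement both key steps differently, and in each case your version is the more watertight one. For the counting step, the paper passes to $F=E/(H,B_H)$ and runs the Case 2 argument of Theorem \ref{type II} on the Chen module $V_{[c]}$, which it describes as faithful over $L_K(F)$; but since $c$ has no exits in $F$, the nonzero element $c-u$ annihilates $V_{[c]}$, so faithfulness fails and the artinian/non-artinian dichotomy does not apply verbatim. You avoid this entirely by working over $L_K(E)$ with the twisted module $V^f_{[\mu]}$, whose annihilator is exactly $P$ and whose endomorphism ring is $K'$ (Proposition \ref{proposition_2.4} and Lemma \ref{lemma_2.2}); Jacobson density plus noncommutativity of $L_K(E)/P$ then gives $|[\mu]|=\dim_{K'}V^f_{[\mu]}\ge 2$ directly. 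For the extraction step, the paper cites Lemma \ref{lem:4.6} as producing $f$ with $s(f)\ne r(f)$, but that inequality is not part of the conclusion of Lemma \ref{lem:4.6}, and it can genuinely fail for an arbitrary pair of tail-equivalent paths even under the type III hypotheses: with a loop $h$ at $w$, an edge $g$ from $w$ to $u$, and a loop $c$ at $u$, the lemma's construction applied to the pair $hgc^\infty$, $hhgc^\infty$ returns the loop $f=h$. Your case split --- for $\ell(c)\ge 2$ take $f=e_m$, whose source and range differ because the vertices of a cycle are pairwise distinct; for $\ell(c)=1$ the entering edge $h_N$ lies in $F^1$ because $r(h_N)=u\notin H$, and $s(h_N)\ne u$ since otherwise $h_N$ would be an exit of $c$ in $F$ --- supplies precisely the missing argument, and it is where the exclusivity/no-exit hypotheses of type III do real work. (Two trivial quibbles: heredity of $H$ is not needed to place $h_N$ in $F^1$, only $r(h_N)\notin H$; and your $m\ge 2$ case uses no noncommutativity at all.) So your route not only proves the theorem but closes two genuine gaps in the paper's own argument.
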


\begin{proof}
	Put $F=E/(H, B_H)$. By Remark \ref{rem:1.2}, we get that $I(H, B_H)$  is a graded ideal of $L_K(E)$, and there is an epimorphism $\varphi: L_K(E) \to L_K(F)$ with $\ker\varphi = I(H,B_H)$. It is clear that $F$ contains $c$ which is an exclusive cycle based at $u$ and $F^0=M(u)$. Let $V_{[c]}$ be the faithful simple $L_K(F)$-simple module determined by $c$. The same arguments used in Case 2 of Theorem \ref{type II} shows that $\dim_K(V_{[c]})\geq 2$, so there exist in $F$ two infinite distinct paths $\mu'$ and $\eta'$ which are tail-equivalent to $c$. According to Lemma \ref{lem:4.6}, we conclude that there exist two infinite paths $\mu$ and $f\mu$ where $f\in F^1\subseteq E^1$ with $s(f)\ne r(f)=s(\mu)$. Therefore the result follows from Lemma~\ref{lem:4.7}.
\end{proof}
\begin{example}
	Let $E$ be the following graph:
	\begin{center}
		\begin{tikzpicture}
			\node at (-1.5,0) (v') {$\bullet$};
			\node at (-1.5,-0.3) {$v'$};
			\node at (3,0) (v) {$\bullet$};
			\node at (3,-0.3) {$v$};
			\node at (0,0) (u') {$\bullet$};
			\node at (0,-0.3) {$u'$};
			\node at (1.5,0) (u) {$\bullet$};
			\node at (1.5,-0.3) {$u$};
						
			\draw [->] (u) to node[above] {$f$} (v);
			\draw [->] (v') to node[above] {$f'$} (u');
			\draw [->] (u') to node[above] {$g$} (u);
			\draw [->] (u) to [out=135,in=45,looseness=8] node[above] {$e$} (u);
			\draw [->] (u') to [out=135,in=45,looseness=8] node[above] {$e'$} (u');
		\end{tikzpicture}
	\end{center}
	As we have showed, the set $H=\{v\}$ is hereditary and saturated, and for each irreducible polynomial $f(x)\in K[x, x^{-1}]$, the ideal $P:=I(H,f(e))$ is primitive of type III and $L_K(E)/P$ is noncommutative. If we set $p=e^\infty$, then $\mu$ is an infinite path with $E^0\backslash H=M(\mu)$. Also, it is clear that the edge $g$ satisfies the condition $u'=s(g)\ne r(g)=u=s(\mu)$. It follows that $ \left\langle 1+2g^*,  1+2 g\right\rangle $ is a non-cyclic free subgroup of $L_K(E)^{\times}$.
\end{example}
Now, combining the main results we get in three subsections above, we obtain the following theorem describing the generators of free subgroups of $L_K(E)^\times$ as desired. 
\begin{theorem}\label{describing}
	Let $E$ be a graph and $K$ a field. Then, there exists an edge $f\in E^1$ such that one of the following two conditions occurs:
	
	\begin{enumerate}[font=\normalfont]	
		\item $f$ is either the initial edge of an infinite path, or it ends at a sink, such that  $ \left\langle 1+2f^*,  1+2 f\right\rangle $ is a non-cyclic free subgroup of $L_K(E)^{\times}$.
		\item $f$ ends at an infinite emitter $w$ which must be a breaking vertex of a hereditary and saturated subset $H$ of $E^0$, such that $ \left\langle 1+2 w^Hf^*,  1+ 2fw^H\right\rangle $ is a non-cyclic free subgroup of $L_K(E)^{\times}$ where $w^H=w-\sum_{\{\substack{s(e)=w,\;r(e)\not\in H}\}}ee^*$.
	\end{enumerate}
\end{theorem}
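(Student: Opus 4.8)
The plan is to assemble the three type-specific results of this section---Theorems \ref{type I}, \ref{type II}, and \ref{th:4.10}---into a single unified statement by means of a case analysis on the type of a suitably chosen primitive ideal. Since a commutative group cannot contain a non-cyclic free subgroup (and indeed a graph with no edges admits no $f$ at all), the very phrasing of the conclusion presupposes that $L_K(E)$ is noncommutative; moreover Sanov's Lemma \ref{lemma_3.5} requires $\mathrm{char}(K)=0$. These are the standing hypotheses of the paper, and I would carry them implicitly here. The first step is then to invoke Lemma \ref{Lemma on primitive ideal} to produce a primitive ideal $P$ of $L_K(E)$ for which $L_K(E)/P$ is a noncommutative primitive ring, and to classify $P$ via Theorem \ref{th:4.1}: it must be of exactly one of the three types I, II, or III.

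Next I would run the case split, matching each type to one of the two listed alternatives. If $P$ is of type I, Theorem \ref{type I} supplies the breaking vertex $w$ (an infinite emitter of a hereditary and saturated set $H$) and an edge $f$ with $r(f)=w$ for which $\langle 1+2w^Hf^*,\, 1+2fw^H\rangle$ is a non-cyclic free subgroup of $L_K(E)^\times$; this is precisely alternative (2). If $P$ is of type II, Theorem \ref{type II} yields an edge $f$ that is the initial edge of an infinite path or ends at a sink, with $\langle 1+2f^*,\, 1+2f\rangle$ free; this is alternative (1). If $P$ is of type III, Theorem \ref{th:4.10} gives an edge $f$ that is the initial edge of an infinite path tail-equivalent to the exclusive cycle $c$, again with $\langle 1+2f^*,\, 1+2f\rangle$ free, hence alternative (1). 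In every case the explicit generators and the geometric description of $f$ coincide with one of the two alternatives, which completes the argument.

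The content of this theorem is almost entirely organizational: the genuine work---constructing the relevant invertible elements, computing the endomorphism rings (Lemmas \ref{lemma_2.1}--\ref{lemma_2.3}, Proposition \ref{proposition_2.4}), and transporting Sanov's free subgroup of $\mathrm{GL}_2$ back along the quotient maps via Lemma \ref{lemma_3.1}---has already been carried out in the earlier theorems. Consequently I expect no substantive obstacle. The only points demanding care are (i) recording the implicit noncommutativity and characteristic hypotheses, so that the statement is not vacuously false in the commutative or edgeless case, and (ii) verifying that the three type-theorems collectively exhaust the two alternatives, with type I feeding alternative (2) and types II and III feeding alternative (1). This is pure bookkeeping, and the proof reduces to citing the three theorems under the trichotomy of Theorem \ref{th:4.1}.
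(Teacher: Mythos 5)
Your proposal is correct and follows essentially the same route as the paper: invoke Lemma \ref{Lemma on primitive ideal} to obtain a primitive ideal $P$ with $L_K(E)/P$ noncommutative, classify $P$ by the trichotomy of Theorem \ref{th:4.1}, and feed type I into alternative (2) via Theorem \ref{type I} and types II and III into alternative (1) via Theorems \ref{type II} and \ref{th:4.10}. Your explicit flagging of the implicit noncommutativity and characteristic-zero hypotheses is a sound observation (the paper's statement indeed omits them), but it does not alter the argument.
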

\begin{proof}
	According to Lemma \ref{Lemma on primitive ideal}, there always exists a primitive ideal $P$ of $L_K(E)$ such that $L_K(E)/P$ is a noncommutative primitive ring. If $P$ is a primitive ideal of type II or III, then the part (1) of Theorem \ref{describing} follows from Theorem \ref{type II}, Corollary \ref{cor:4.9}, and \ref{th:4.10}. If $P$ is of type I, then the part (2) of Theorem \ref{describing} follows from Theorem \ref{type I}. Hence, the theorem is proved.
\end{proof}

{\noindent\textbf{Funding} }This research is funded by the Vietnam Ministry of Education and Training under grant number B2024-CTT-02.

\bigskip 

{\noindent\textbf{Conflict of Interest.} }The authors have no conflict of interest to declare that are relevant to this article.


\begin{thebibliography}{}
		
	\bibitem{Bo_abrams_2017} G. Abrams, P. Ara, and M. Siles Molina, \textit{Leavitt path algebras}, Lecture Notes in Mathematics series, \textbf{2191}, Springer-Verlag Inc., 2017.
				
	\bibitem{Pa_ara_rangas_2014} P. Ara, K.M. Rangaswamy, Finitely presented simple modules over Leavitt path algebras, \textit{J. Algebra} \textbf{417} (2014), 333--352.
			
	\bibitem{Pa_chen_2015} X. W. Chen, Irreducible representations of Leavitt path algebras, \textit{Forum Math.} \textbf{27} (2015), 549--574.
	
	\bibitem{Pa_Chiba_1996} K. Chiba, Free subgroups and free subsemigroups of division rings, \textit{J. Algebra} \textbf{184}(2) (1996), 570--574.
	
	\bibitem{Pa_danh-deo_2023} L. Q. Danh, T. T. Deo, A note on subnormal subgroups in division rings containing solvable subgroups, \textit{Bull. Aust. Math. Soc.} \textbf{108} (2023), 422--427.
			
	\bibitem{Pa_Goncalves-Mandel_1986} J. Z. Gonçalves and A. Mandel, Are there free groups in division rings?, \textit{Israel J. Math.} \textbf{53} (1986), 69--80. 
	
	\bibitem{Pa_Goncalvez_1984} J. Z. Gon\c calves, Free groups in subnormal subgroups and the residual nilpotence of the group of units of group rings,  \textit{Can. Math. Bull.}  \textbf{27} (1984), 365--370. 
	
	\bibitem{Bo_Harpe_2000} P. de la Harpe, \textit{Topics in geometric group theory}, Chicago Lectures in Mathematics, University of Chicago Press, Chicago, IL, (2000).
		
	\bibitem{Pa_Hai-Ngoc_2013} B. X. Hai, N. K. Ngoc, A note on the existence of non-cyclic free subgroups in division rings, \textit{ Archiv der Math}, \textbf{101} (2013), 437--443.
	
	\bibitem{Bo_lam_2001} T. Y. Lam, \textit{A first course in noncommutative rings},  GTM \textbf{131}, Springer-Verlag, New York, second edition, 2001.
	
	\bibitem{Pa_lichman_1977} A. I. Lichtman, On subgroups of the multiplicative group of skewfields, \textit{Proc. Amer. Math. Soc.} \textbf{63} (1977), 15--16.
	
	\bibitem{Pa_Ngoc-Bien-Hai} N. K. Ngoc, M. H. Bien, B. X. Hai, Free subgroups in almost subnormal subgroups of general skew linear groups, \textit{St. Peters. Math. J.}
	\textbf{28}:5 (2017), 707--717.
	
	\bibitem{Pa_rangas_2015} K.M. Rangaswamy, On simple modules over Leavitt path algebras, \textit{J. Algebra} \textbf{431} (2015), 239--258.
	
	\bibitem{Pa_rangas_2013} K. M. Rangaswamy, The theory of prime ideals of Leavitt path algebras over arbitrary graphs, \textit{J. Algebra} \textbf{375} (2013), 73--96.
	
	\bibitem{Pa_reichstein_1995} R. Reichstein and N. Vonessen, Free subgroups of division algebras,  \textit{Comm. in Algebra} {\bf 23} (1995), 2181--2185.
	
	\bibitem{Pa_Sanov_1947} I. N. Sanov, A property of a representation of a free group (Russian), \textit{Doklady Acad. Nauk SSSR} \textbf{57} (1947), 657--659.
	
\end{thebibliography}
\end{document}